\documentclass[reqno]{amsart}
\usepackage{fullpage}
\usepackage{array}
\usepackage{mathrsfs}
\usepackage{hyperref}
\usepackage{graphicx,color}
\usepackage{float}
\usepackage{tikz-cd}

\newcommand{\varied}

\newcommand{\var}{\ensuremath{\textnormal{var}}}

\newcommand{\F}{\mathbb{F}}
\newcommand{\V}{\mathfrak{V}}
\newcommand{\Ann}{\mathrm{Ann}}
\newcommand{\cA}{\mathcal{A}}   
\newtheorem{theorem}{Theorem}[section]
\newtheorem{lemma}[theorem]{Lemma}
\newtheorem{corollary}[theorem]{Corollary}

\newtheorem{proposition}[theorem]{Proposition}
\theoremstyle{definition}
\newtheorem{remark}[theorem]{Remark}
\newtheorem{definition}[theorem]{Definition}
\newcolumntype{C}[1]{>{\centering\let\newline\\\arraybackslash\hspace{0pt}}m{#1}}

\usepackage[utf8]{inputenc}
\usepackage{amsfonts}
\usepackage{amssymb}
\usepackage{verbatim}

\numberwithin{equation}{section}
\begin{document}
\title{Finite basis property for finite graded algebras}

%\author{}
%\thanks{{\it E-mail addresses:} }
\author[Y.~Bahturin et al]{Yuri Bahturin}
%\thanks{Y.~Bahturin is supported by NSERC Discovery grant \#227060-19}
\address{Department of Mathematics and Statistics, Memorial University of Newfoundland, St. John's, NL, A1C5S7, Canada.}
\email{bahturin@mun.ca}

\author[]{Daniela Martinez Correa}
\thanks{D.~Correa is supported by Fapesp, grant no.~2024/01338-0 and 2025/11605-8.}
\address{Department of Mathematics, Instituto de Matem\'atica e Estat\'istica, Universidade de S\~ao Paulo, SP, Brazil}
\email{danielam.correa@ime.usp.br}

\author[]{Diogo Diniz}
\thanks{D.~Diniz was partially supported by Conselho Nacional de Desenvolvimento Cient\'ifico e Tecnol\'ogico (CNPq) grant No.~304328/2022-7.}
\address{Unidade Acadêmica de Matemática, Universidade Federal de Campina Grande, Campina Grande, PB, 58429-970, Brazil}
\email{diogo@mat.ufcg.edu.br}

%\thanks{\footnotesize $^{1}$ Partially supported by FAPESP, grant no.~2025/05699-0}

%\thanks{\footnotesize $^{2}$ Partially supported by FAPESP, grant no.~}

%\subjclass[2020]{Primary 16R10, 16W50, Secondary 20C30}

%\keywords{graded polynomial identities, finite graded algebras}

%\dedicatory{Instituto de Matemática e Estatística, Universidade de São Paulo, São Paulo,
%Brazil}

\begin{abstract}  Let $G$ be a finite group and let $\F$ be a finite field. We prove that any finite-dimensional $G$-graded  associative algebra $A$ over $\F$ has a finite basis for its $G$-graded polynomial identities.
\end{abstract}

\maketitle

\section{Introduction}

The Specht problem for an associative algebra $A$ over a field $\F$ asks if the identical relations of $A$ follow from a finite number of them.

In the case where $\F$ has characteristic zero, the Specht problem has been solved in the positive in a seminal work of A. Kemer \cite{kemer}. If $\F$ is a field of characteristic $p>0$, a negative solution was given by Kanel-Belov \cite{belov} and A. Grishin \cite{grishin}. However, it is still an open question if there exists a \emph{finite-dimensional} algebra $A$ over a  field $\F$ of  characteristic $p>0$ without finite basis of identities. In the case where $\F$ is finite, positive solutions to the Specht problem were given by R. Kruse \cite{kruse} and I. Lvov \cite{Lvov}.

A natural extension of the Specht problem is the \emph{graded Specht problem} which asks if the graded identities of an associative algebra are finitely based. In the case over zero characteristic fields a positive answer has been given in the important papers of E. Aljadeff - A. Kanel-Belov \cite{AKB} and Y. Karasik \cite{Karasik}. In the case of $\F$ of characteristic $p>0$ the answer is obviously negative in general and unknown for finite-dimensional algebras. 

In this paper we solve in the positive the Specht problem for the identical relations of \emph{group-graded finite-dimensional associative algebras over finite fields}.  

For the sake of completeness, we mention the main results on the finite basis problem in other classes of algebras. Some authors still call the finite basis problem for linear algebras over fields the Specht problem, even in the case of non-associative algebras or  algebras with additional structure.

 In 1955, Lyndon \cite{lyndon} built an example of a general universal algebra of order 6  with one binary operation that does not admit finite basis of identities. Then, 
Murski\u{i} \cite{Mursky} suggested an example of an algebra of order 3 without finite basis of identities. Later, Polin \cite{Polin} produced counter-examples for the case of finite-dimensional \emph{linear} algebras over
any field.
On the other hand, positive results have been obtained for several important classes of algebras. In particular, the problem was solved in the affirmative for finite groups, associative rings, and Lie rings in the works
%The first positive answer are given in the classes of finite group, associative rings, Lie rings in the papers 
of Oates-Powell \cite{oatespowell}, L'vov \cite{Lvov}, Kruse \cite{kruse}, Bahturin-Olshanskii \cite{Bahturinolhshanskii}. Years later, in \cite{Medvedev}, Medvedev established an affirmative solution
%solved  the problem in afirmative 
for alternative, Jordan, Malcev
and certain even more general finite algebras.

\medskip

In this work, we investigate the finite basis problem for $G$-graded associative algebras of finite dimension over finite fields. The central objective is to determine if the $T$-ideal of $G$-graded identities for such algebras is always finitely generated as a $T$-ideal, when the group $G$ is finite. Many of the methods employed in the present paper are taken from the paper \cite{Lvov}. It should be emphasized that the tools used in all papers on the Specht problem for finite rings/algebras have their origin in Group Theory \cite{oatespowell}. A convenient reference is  \cite[Chapter 5]{Neumann}. 

\medskip

The paper is organized as follows: Section 2 is devoted to the preliminaries of graded algebras and graded polynomial identities. Here we also discuss the graded Jacobson radical of a graded associative algebra. The main result of this section, given in Theorem \ref{WAgraded},  is an analogue of the classical Wedderburn Theorem. In Section 3, we recall some well-known results about Cross varieties, the main result of this section being Proposition \ref{criticalsimple}. Finally, Section 4 contains the proof of our main result, Theorem \ref{teoprincipal}, preceded by some auxiliary technical results.

\section{Preliminaries}

\subsection{Graded algebras and graded modules}
	In this section, we fix a field $\F$ and a group $G$ with identity element $e$. All algebras and vector spaces are assumed to be over $\F$. The aim of this section is to prove some properties of the graded Jacobson radical of a graded algebra. We start with some basic definitions about $G$-graded modules.
	
	\medskip

	A {\it $G$-grading} on a vector space $V$ is a decomposition 
	\(V=\bigoplus_{g\in G} V_g,\)
	where $V_g$ are subspaces of $V$, for $g\in G$. A vector space $V$ with a fixed grading is called a {\it $G$-graded vector space}. A subspace $W\subset V$ is said to be a {\it $G$-graded subspace} if
	\(W=\bigoplus_{g\in G} (V_g\cap W).\)
	
	\medskip
	
	Consider $V$ and $W$ two $G$-graded vector spaces. A linear map
	$f : V\rightarrow W$ is be called a {\it homomorphism of $G$-graded spaces} if for all $g\in G$, we have $f(V_g)\subseteq W_g$. The set of all such maps will be denoted $\textrm{Hom}^G(V,W)$. Recall that an associative algebra $\mathcal{A}$ is called a {\it $G$-graded algebra}, if it is a $G$-graded vector space such that $\mathcal{A}_g \mathcal{A}_h\subseteq \mathcal{A}_{gh}$, for $g,h\in G$. Thus, a homomorphism of $G$-graded algebras is a homomorphism of algebras such that it is also a homomorphism of $G$-graded spaces.
	
	\medskip
	
	Now, we recall some basic definitions about $G$-graded modules. Let $\mathcal{A}$ be a $G$--graded algebra. A \emph{$G$-graded right $\mathcal{A}$--module} is a right $\mathcal{A}$-module with a $G$-grading, as a vector space, such that $M_gA_h\subseteq M_{gh}$ for all $g,h\in G$. A \textit{$G$-graded right $\mathcal{A}$-submodule} of $M$ is a right $\mathcal{A}$--submodule of $M$ such that is also a $G$-graded subspace. If $M\mathcal{A}\neq \{ 0\}$ and $M$ does not admit a non-zero proper $G$-graded right $\mathcal{A}$-submodules, then $M$ is called {\it $G$-graded simple}. A {\it $G$-graded homomorphism} between $G$-graded modules is a homomorphim of right $\mathcal{A}$-modules which is also a homomorphism of $G$-graded vector spaces.
	
	\medskip
	
	Let $M$ be a $G$-graded right $\mathcal{A}$-module and let $N_1,N_2$ be $G$-graded right $\mathcal{A}$-submodules of $M$. If $N_1\subseteq N_2$ and $N_2/N_1$ is  simple as a $G$-graded module, then $N_2/N_1$ is called a {\it $G$-chief factor of $M$}.  A {\it composition series of $M$} is a finite ascending chain of $G$-submodules of $M$
	\[\{ 0\}=N_0\subseteq N_1\subseteq N_2\subseteq\cdots\subseteq N_k=M\]
	such that  each $N_{i}/N_{i-1}$ is a $G$-chief factor of $M$, for $i\in\{1,\ldots,k\}$. The following result is the  Jordan--H\"older Theorem for graded modules. It is a particular case of the Jordan - H\"older theorem for $\Omega$-algebras, see e.g. \cite{AGK}.

	\begin{theorem}[Graded Jordan--H\"older]\label{GJH}
		Let $M$ be a $G$-graded left $\mathcal{A}$--module admitting a graded composition series. Then any two graded composition series of $M$ have the same length, and their graded chief factors are isomorphic up to permutation.
	\end{theorem}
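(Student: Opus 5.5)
We prove the statement by the classical Schreier refinement argument, carried out inside the lattice of $G$-graded submodules. The one new ingredient is that the graded submodules of $M$ form a modular lattice closed under sums and intersections. Indeed, if $N,N'$ are graded submodules, then $N+N'$ and $N\cap N'$ are again submodules, and they are graded: for $x\in N\cap N'$, each homogeneous component $x_g$ lies in $N$ and in $N'$, because both are graded subspaces. Quotients by graded submodules inherit a grading, $(N_2/N_1)_g=(N_2)_g+N_1/N_1$, compatible with the action, since $N_1$ is graded. The isomorphism theorems also hold in the graded category: the canonical isomorphism $(A+B)/B\cong A/(A\cap B)$ sends homogeneous elements to homogeneous elements of the same degree, so it is a graded isomorphism. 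With these facts, each step of the ungraded proof goes through verbatim.

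The plan is as follows.
\begin{enumerate}
\item Prove the graded Zassenhaus (butterfly) lemma. For graded submodules $A'\subseteq A$ and $B'\subseteq B$,
\[(A'+(A\cap B))/(A'+(A\cap B'))\cong (B'+(A\cap B))/(B'+(A'\cap B))\]
as graded modules. Both sides are graded isomorphic to $(A\cap B)/\big((A'\cap B)+(A\cap B')\big)$, by the graded second isomorphism theorem together with modularity.
\item Deduce the graded Schreier refinement theorem. Given two chains $(N_i)$ and $(L_j)$ of graded submodules, insert $N_{i,j}=N_{i-1}+(N_i\cap L_j)$ and $L_{j,i}=L_{j-1}+(L_j\cap N_i)$. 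All these terms are graded. By step 1, $N_{i,j}/N_{i,j-1}\cong L_{j,i}/L_{j,i-1}$ as graded modules, so the two refinements have graded-isomorphic factors up to the permutation $(i,j)\leftrightarrow(j,i)$.
\item Apply step 2 to two graded composition series. Since each factor $N_i/N_{i-1}$ is graded simple, the refinement inserts only repetitions, which give zero factors. Deleting the zero factors from both refinements recovers the original two series and matches their nonzero factors bijectively up to graded isomorphism. In particular the lengths agree.
\end{enumerate}

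The step that needs care is the paper's definition of graded simplicity, which demands $M\mathcal{A}\neq0$. We must make sure that a graded simple factor $N_i/N_{i-1}$ admits no proper refinement by graded submodules, which is exactly the absence of intermediate graded submodules. We must also make sure that isomorphic factors carry the condition $M\mathcal{A}\ne0$ simultaneously. The first follows from the definition. The second holds because a graded isomorphism preserves the module action. So the nonzero factors in the refined series are precisely the original chief factors, and zero factors can be discarded consistently on both sides.

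Alternatively, as the paper indicates, the statement is an instance of the Jordan--H\"older theorem for $\Omega$-groups. One views $M$ as a group with operators $\mathcal{A}$ together with the projections $\pi_g$ onto $M_g$. A subgroup stable under all $\pi_g$ is exactly a graded subspace, since stability under scalars is part of the operators. The $\Omega$-subgroups are then exactly the graded submodules, and one can cite \cite{AGK} directly. Either route works. The main obstacle is only the verification that intersections, sums and the canonical isomorphisms respect the grading, which is the routine check described at the start.
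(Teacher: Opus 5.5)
Your proof is correct. The paper gives no argument of its own for this statement: it calls it a special case of the Jordan--H\"older theorem for $\Omega$-algebras and cites \cite{AGK}. That is exactly your alternative route. Viewing a graded module as a group whose operators are the scalars, the right multiplications by elements of $\mathcal{A}$ and the projections $\pi_g$ is the same device the paper uses in Section 3 for graded algebras. A map commutes with every $\pi_g$ if and only if it preserves degrees, so the isomorphisms produced by the cited theorem are indeed graded.

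Your main route unpacks that cited theorem rather than departing from it. The Zassenhaus lemma followed by Schreier refinement is precisely how Jordan--H\"older is proved for groups with operators. You correctly isolate the only graded input needed:
\begin{itemize}
\item graded submodules form a sublattice, hence a modular lattice, of all submodules;
\item quotients, and the correspondence between graded submodules of $N_i/N_{i-1}$ and intermediate graded submodules, respect the grading;
\item the canonical isomorphisms are degree-preserving bijections, so their inverses are automatically graded.
\end{itemize}
The direct proof is self-contained and shows that nothing beyond the lattice of graded submodules is used. The citation is shorter and covers all such operator structures at once.

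One small remark: you do not need to check that isomorphic factors satisfy $M\mathcal{A}\neq 0$ simultaneously. Both series are assumed to be composition series, so all their factors are graded simple by hypothesis. Your counting in step 3 uses only that each factor is nonzero and has no intermediate graded submodules.
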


\subsection{Graded Jacobson radical}\label{ssGJR}	
	We start with recalling few definitions and results related to the notion of Jacobson radical.

\medskip
		Given an algebra $\cA$ and  a  right $\mathcal{A}$-module $M$,  the annihilator of $M$ in $\mathcal{A}$ is defined by
		\[\Ann_G(M)=\{a\in\mathcal{A}\mid xa=\{ 0\}\, \forall x\in M\}.\]

\medskip
	
	The intersection $J_G(\mathcal{A})=\bigcap \Ann_G(M)$ of the annihilators of all $G$-graded simple $\cA$- modules is called the \emph{graded Jacobson radical of $\cA$}. Any $G$-graded algebra $\cA$ is a right (left) $G$-graded $\cA$-module, denoted by $\cA_\cA$. If $K$ and $L$ are graded right ideals of $\cA$ such that  $K/L$ is a chief factor of $\cA_\cA$ then $KJ_G(\cA)\subseteq L$.
	
\medskip
 In the remainder of this section, we adapt the results about the graded Jacobson radical, given in various sources in the case of unital graded associative algebras to the case of graded algebras which are not necessarily unital.
 
 It is natural to restrict oneself to Artinian graded associative algebras. Recall that a $G$-graded algebra is right \emph{$G$-Artinian}, if every descending chain of $G$-graded right ideals of $\mathcal{A}$ has only finitely many pairwise different terms.

	%%%%%%% NILP GRADED J
	\begin{proposition}\label{nilpJGA}
		If $\mathcal{A}$ is a right $G$-Artinian algebra, then $J_G(\mathcal{A})$ is a nilpotent $G$-graded ideal.  Moreover $J_G(\mathcal{A})$ is the largest nilpotent $G$-graded ideal of $\mathcal{A}$.
	\end{proposition}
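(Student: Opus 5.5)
Throughout, write $J=J_G(\cA)$. The plan is to prove three things in turn: $J$ is a graded two-sided ideal, $J$ is nilpotent, and every nilpotent graded ideal lies in $J$.

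\emph{$J$ is a graded ideal.} For each $G$-graded simple module $M$, the annihilator $\Ann_G(M)$ is a two-sided ideal, since $M a\subseteq M$. It is also graded. Indeed, if $a=\sum_g a_g$ kills $M$, then for homogeneous $x\in M_h$ the element $xa=\sum_g xa_g$ has its components $xa_g\in M_{hg}$ in distinct degrees. Hence each $xa_g=0$. Since $M$ is spanned by homogeneous elements, every $a_g\in\Ann_G(M)$. An intersection of graded ideals is a graded ideal, so $J$ is a graded ideal.

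\emph{$J$ is nilpotent.} Consider the descending chain of graded right ideals $J\supseteq J^2\supseteq\cdots$. Products of graded ideals are graded, and these are even two-sided ideals. By the $G$-Artinian hypothesis the chain stabilizes, so $J^n=J^{n+1}=:I$ for some $n$. Suppose $I\neq 0$. Then $I\cdot I=J^{2n}=I\neq 0$.

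\begin{itemize}
\item Among graded right ideals $K\subseteq I$ with $KI\neq 0$, pick a minimal one; this is possible by the Artinian condition.
\item Since $KI\neq 0$ and $I$ is spanned by homogeneous elements, there is a homogeneous $x\in K$ with $xI\neq 0$.
\item The set $xI$ is a graded right ideal contained in $K$, and $(xI)I=xI^2=xI\neq 0$. By minimality, $xI=K$.
\item Hence $x=xy$ for some $y\in I$. Comparing homogeneous components, we may take $y$ homogeneous of degree $e$: the degree-$\deg x$ component of $xy$ is $x y_e$, so $x=xy_e$ with $y_e\in I_e$.
\end{itemize}

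Now $y_e\in J$, and I would argue that $y_e$ is nilpotent, which yields the contradiction $x=xy_e^m=0$.

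This nilpotency step is the main obstacle, and I would handle it without assuming a unit. Since $x\neq 0$, the graded right module $x\cA$ is nonzero. Using the Artinian property and a Zorn/minimality argument, take a graded right ideal $N\subseteq K$ maximal among those not containing $x$ (or work in $K/N$). Then $K/N$ has a graded simple subquotient containing the image of $x$. Since $xI=K$, the right action of $\cA$ on that subquotient is nonzero. But $y_e\in J$ annihilates it, while $x=xy_e$ forces $\bar x=\bar x y_e=0$, contradicting the choice of $N$. The delicate points are producing the graded simple module where $\bar x\neq0$ and checking $M\cA\neq0$; for the latter I use $\bar x=\bar x y_e$ with $y_e\in\cA$. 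Hence $I=0$, and $J$ is nilpotent.

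\emph{Maximality.} Let $N$ be a nilpotent graded ideal and $M$ a $G$-graded simple module. The set $MN$ is a graded submodule, so $MN=0$ or $MN=M$. If $MN=M$, then $M=MN^k=0$ for large $k$, which is absurd. Hence $MN=0$, so $N\subseteq\Ann_G(M)$ for every such $M$, and therefore $N\subseteq J$. Together with the nilpotency of $J$, this shows that $J$ is the largest nilpotent graded ideal.
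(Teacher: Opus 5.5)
Your proof is correct. The gradedness and maximality parts match the paper (the paper leaves gradedness implicit), but your nilpotency argument takes a different route.

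\textbf{The paper's route.} After $J^n=J^{n+1}$, the paper introduces the graded two-sided ideal $S=\{x\in J\mid xJ^n=0\}$. If $J^n\not\subseteq S$, it uses the descending chain condition to take a minimal nonzero graded submodule $I/S$ of $(J^n+S)/S$. It notes that $J$ annihilates this submodule, so $IJ^n\subseteq S$. Hence $IJ^{2n}=IJ^n=0$, so $I\subseteq S$, a contradiction.

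\textbf{Your route.} You run the classical Nakayama-type argument. A minimal $K\subseteq I$ with $KI\neq 0$ gives $K=xI=x\cA$ and $x=xy$ with $y\in J$. Zorn's lemma then produces a graded simple module on which $\bar x\neq 0$.

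\textbf{Comparison.} The paper's version is shorter and uses only the descending chain condition, with no Zorn. Yours has the merit of checking explicitly that $J$ is graded and that the module you use satisfies $M\cA\neq 0$. The paper passes over that point: its $I/S$ could have zero action, though then $IJ\subseteq S$ holds trivially anyway.

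\textbf{Small corrections and simplifications.}
\begin{itemize}
\item A homogeneous $x\in K$ with $xI\neq 0$ exists because $K$ is graded, not because $I$ is.
\item The maximal $N$ comes from Zorn's lemma, not from the Artinian condition.
\item Since $K=x\cA$, every graded right ideal strictly between $N$ and $K$ contains $x$ and so equals $K$. Thus $K/N$ itself is graded simple, with $(K/N)\cA\neq 0$ because $\bar x=\bar x y\neq 0$. This is the clean way to phrase your ``subquotient containing the image of $x$''.
\item Neither the passage to $y_e$ nor the announced nilpotency of $y_e$ is needed. The element $y\in I\subseteq J$ already annihilates $K/N$, so $\bar x=\bar x y=0$ directly.
\end{itemize}
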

	\begin{proof}
		We first prove that $J=J_G(\cA)$ is a graded nilpotent ideal of $\cA$. Consider the descending chain of  $G$-graded right ideals:
		\[J\supseteq J^2\supseteq\cdots\supseteq J^n\supseteq \cdots.\]
		Since $\mathcal{A}$ is right Artinian, as a $G$-graded algebra,  there exists $n\geq 1$, such that
		\[J^{n}=J^{k}, \quad \forall k\geq n.\]
		%Thus, if $x\in\mathcal{A}$ and $xJ^{n+2}=\{ 0\}$, then $xJ^{n}=\{ 0\}$. \\
		%\indent
		We will show that $J^{n}=\{ 0\}$.  Consider the 2-sided graded ideal $S=\{x\in J\mid xJ^n=\{ 0\}\}$.  If $J^n\subseteq S$, then $J^nJ^n= J^{2n}=\{ 0\}$, so that $J^n=\{ 0\}$, as needed.

		If $ J^n \nsubseteq S$, consider the $G$-graded Artinian $\cA$-module $\cA/S$. The submodule $J^n+S/S$ is nonzero so contains a minimal nonzero right graded submodule $I/S$, for some graded right ideal $I$ of $\cA$.  Since $I/S$ is a simple graded $\cA$-module, $IJ_G(\cA)\subseteq S$. In particular, $IJ^n\subseteq S$. But then $(IJ^n)J^n=\{ 0\}$. Since $J^{2n}=J^n$ we have that $IJ^n=\{ 0\}$  so that $I\subset S$, which is a contradiction.
		
		To prove that $J_G(\cA)$ is the greatest graded nilpotent ideal of $\cA$, we consider a $G$-graded ideal $N$ of $\mathcal{A}$, such that $N^k=\{ 0\}$, for some natural $k$. Let $M$ be a graded simple right $\mathcal{A}$-module. If $MN\neq \{0\}$ then $M =MN$ and by induction $M=MN^k=\{0\}$, a contradiction. So $MN=\{0\}$ for any simple $G$-graded simple module $M$. It follows that $N\subseteq J_G(\mathcal{A})$, as needed.
	\end{proof}
	
		\begin{lemma}
		Let $\mathcal{A}$ be a graded algebra. Then $J_G(\mathcal{A}/J_G(\mathcal{A}))=\{ 0\}$.
	\end{lemma}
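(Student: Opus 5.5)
Set $J=J_G(\cA)$ and $\bar\cA=\cA/J$. Since $J$ is a graded ideal (an intersection of annihilators of graded modules, each of which is a graded two-sided ideal), $\bar\cA$ inherits a $G$-grading with $\bar\cA_g=(\cA_g+J)/J$. The plan is to show that every graded simple $\cA$-module is naturally a graded simple $\bar\cA$-module, and that this makes the annihilators in $\bar\cA$ equal to the images of the annihilators in $\cA$. Then $J_G(\bar\cA)$ is the image of $J$, which is zero.

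\textbf{Step 1 (from $\cA$-modules to $\bar\cA$-modules).} Let $M$ be a graded simple right $\cA$-module. By definition $MJ=\{0\}$, so the action $x\cdot(a+J)=xa$ is well defined. It makes $M$ a graded right $\bar\cA$-module with the same grading, since $M_g\bar\cA_h\subseteq M_{gh}$. The graded $\bar\cA$-submodules of $M$ are exactly its graded $\cA$-submodules, because both module structures see the same set $M\cA=M\bar\cA$. Moreover $M\bar\cA=M\cA\neq\{0\}$. Hence $M$ is graded simple over $\bar\cA$, and
\[\Ann_G^{\bar\cA}(M)=\{a+J\mid Ma=\{0\}\}=\Ann_G^{\cA}(M)/J,\]
using $J\subseteq\Ann_G^{\cA}(M)$.

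\textbf{Step 2 (conclusion).} Let $\bar a=a+J\in J_G(\bar\cA)$. Then $\bar a$ annihilates every graded simple $\bar\cA$-module, and in particular, by Step 1, every graded simple $\cA$-module $M$ viewed as a $\bar\cA$-module. Hence $Ma=\{0\}$ for all such $M$, so $a\in\bigcap\Ann_G(M)=J$ and $\bar a=0$.

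The only point needing care is the correspondence of submodules and the nonvanishing condition $M\bar\cA\neq\{0\}$ in the definition of graded simple. Both are immediate here because $\cA$ need not be unital but the two actions coincide.

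Note that the argument only uses one direction: simple $\cA$-modules give simple $\bar\cA$-modules. The converse, that graded simple $\bar\cA$-modules pull back to graded simple $\cA$-modules, is also true but is not needed, since $J_G(\bar\cA)$ lies inside the intersection over any subfamily of graded simple $\bar\cA$-modules.

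There is one degenerate case. If $\cA$ has no graded simple modules, then $J=\cA$ and $\bar\cA=\{0\}$, whose radical is trivially $\{0\}$. No Artinian hypothesis and no use of Proposition~\ref{nilpJGA} is required.
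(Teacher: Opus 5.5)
Your proof is correct and follows the same route as the paper. Every graded simple $\cA$-module is a graded simple $\cA/J_G(\cA)$-module, so an element $a+J_G(\cA)$ of $J_G(\cA/J_G(\cA))$ annihilates all of them, and hence $a\in J_G(\cA)$. You additionally spell out the submodule correspondence, the nonvanishing condition $M\bar\cA\neq\{0\}$, and the degenerate case, all of which the paper leaves implicit.
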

	\begin{proof}
		Let $a+J_G(\mathcal{A})$ be an element of $J_G(\mathcal{A}/J_G(\mathcal{A}))$ and let $M$ be a graded simple $\mathcal{A}$-module. Then $M$ is a graded simple $\mathcal{A}/J_G(\mathcal{A})$-module, therefore \[0=m(a+J_G(\mathcal{A}))m=ma,\] for all $m\in M$. As a consequence $a\in \mathrm{Ann}_G(M)$. Since $M$ is an arbitrary graded simple module, we conclude that $a\in J_G(\mathcal{A})$. Thus $J_G(\mathcal{A}/J_G(\mathcal{A}))=\{ 0\}$.
	\end{proof}
	
	\begin{lemma}\label{radicalideals}
		Let $\mathcal{A}$ be a finite dimensional $G$-graded algebra and $I$ a $G$-graded 2-sided ideal of $\mathcal{A}$. Then $J_G(I)=J_G(\mathcal{A})\cap I.$
	\end{lemma}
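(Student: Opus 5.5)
Since $\mathcal{A}$ is finite dimensional, both $\mathcal{A}$ and $I$ are right $G$-Artinian: a descending chain of graded right ideals of $I$ is a descending chain of subspaces. So Proposition \ref{nilpJGA} applies to both. It identifies $J_G(\mathcal{A})$ as the largest nilpotent graded ideal of $\mathcal{A}$, and $J_G(I)$ as the largest nilpotent graded ideal of the algebra $I$. The plan is to prove the two inclusions using only this characterization, and so to avoid any comparison of simple modules over $I$ and over $\mathcal{A}$.

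For $J_G(\mathcal{A})\cap I\subseteq J_G(I)$, we argue as follows. The set $J_G(\mathcal{A})\cap I$ is a graded subspace, since it is an intersection of graded subspaces. It is a two-sided ideal of $\mathcal{A}$, hence of $I$. It is nilpotent because it lies in $J_G(\mathcal{A})$. By Proposition \ref{nilpJGA} applied to $I$, it is therefore contained in $J_G(I)$.

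For the reverse inclusion, set $N=J_G(I)$. This is a nilpotent graded ideal of $I$, say $N^k=\{0\}$, but it need not be an ideal of $\mathcal{A}$. The key step, and the main obstacle, is to show that $N$ is stable under multiplication by $\mathcal{A}$, or at least to produce a nilpotent graded ideal of $\mathcal{A}$ containing $N$.

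The first thing to try is to consider $\tilde N=N+\mathcal{A}N+N\mathcal{A}+\mathcal{A}N\mathcal{A}$, which is a graded two-sided ideal of $\mathcal{A}$ contained in $I$. We then check nilpotency. Products of many factors from $\tilde N$ contain long strings of the form $n\,a\,n'$ with $a\in\mathcal{A}^{\#}$ (adjoining a formal unit). Since $N a\subseteq I$ and $a n'\in I$, one gets $(\mathcal{A}N\mathcal{A})^2\subseteq \mathcal{A}\,N(\mathcal{A}N\mathcal{A})\subseteq\mathcal{A}N\cdot I$. The point is that $I N I$ and $I N$, $N I$ are ideals of $I$ contained in $N$, because $N$ is an ideal of $I$.

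Concretely, for $n_1,n_2\in N$ and $a\in\mathcal{A}$ we have $n_1 a n_2 = n_1(a n_2)$. Here $a n_2\in I$ and $N I\subseteq N$, so $n_1 a n_2\in N$. Hence $N\mathcal{A}N\subseteq N$. Iterating, $(\mathcal{A}^{\#}N\mathcal{A}^{\#})^{2k+1}\subseteq \mathcal{A}^{\#}(N\mathcal{A}^{\#}N)\cdots\subseteq\mathcal{A}^{\#}N^{k}\mathcal{A}^{\#}$ up to index bookkeeping. More precisely, $(\mathcal{A}^{\#}N\mathcal{A}^{\#})^{m}\subseteq \mathcal{A}^{\#}N^{m}\mathcal{A}^{\#}$, using $N\mathcal{A}^{\#}N\subseteq N\cdot N$? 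This last point needs care. Indeed $N\mathcal{A}N\subseteq N$ only gives one factor, so instead we use $n_1 a n_2 n_3 = n_1 (a n_2) n_3$. Here $(a n_2)\in I$ and $n_1(a n_2)\in N$, so $n_1 a n_2 n_3\in N\cdot N$. Thus each "bridge" $a$ between consecutive $N$-factors can be absorbed while preserving a count of roughly half the factors. This gives $(\mathcal{A}^{\#}N\mathcal{A}^{\#})^{2m}\subseteq \mathcal{A}^{\#}N^{m}\mathcal{A}^{\#}$, so $\tilde N^{2k}=\{0\}$.

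With this, $\tilde N$ is a nilpotent graded ideal of $\mathcal{A}$, so $\tilde N\subseteq J_G(\mathcal{A})$ by Proposition \ref{nilpJGA}. Therefore $N\subseteq J_G(\mathcal{A})\cap I$, which completes the proof. The only delicate part is the absorption count in the nilpotency estimate, which we will verify by pairing factors.
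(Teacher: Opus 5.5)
Your proof is correct and follows the paper's route. The first inclusion comes from the maximality of $J_G(I)$ among nilpotent graded ideals of $I$. The second comes from showing that the graded ideal of $\mathcal{A}$ generated by $J_G(I)$ is nilpotent, which the paper does via $a_1j_1a_2j_2\in J_G(I)$.

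Your nilpotency step is more hesitant than it needs to be. The clean pairing is $n_1(b\,n_2\,c)\in NI\subseteq N$ for $n_1,n_2\in N$ and $b,c\in\mathcal{A}^{\sharp}$. This absorbs both bridges around every second factor and gives $\tilde N^{2k}\subseteq\mathcal{A}^{\sharp}N^{k}=\{0\}$ directly, so the tentative claim $(\mathcal{A}^{\sharp}N\mathcal{A}^{\sharp})^{m}\subseteq\mathcal{A}^{\sharp}N^{m}\mathcal{A}^{\sharp}$ and the $n_1an_2n_3$ example can be dropped.
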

	\begin{proof}
		Since $J_G(\mathcal{A})\cap I$ is a graded nilpotent ideal of $I$,  Theorem \ref{nilpJGA} yields $J_G(\mathcal{A})\cap I\subseteq J_G(I)$. Moreover, by Theorem \ref{nilpJGA}, $J_G(I)$ is a nilpotent $G$-graded ideal of $I$. Now let $M$ be the 2-sided graded ideal of $\mathcal{A}$ generated by $J_G(I)$. Then $M$ is nilpotent. Indeed, if $j_1,j_2\in J_G(I)$ and $a_1, a_2\in \cA$ then $a_1j_1a_2j_2\in J_G(I)\cap M^2$. Therefore, if $J_G(I)^n=0$ then $M^{2n}=0$. The second statement in Theorem \ref{nilpJGA} implies that $J_G(I)\subseteq M\subseteq J_G(\mathcal{A})$. Since $J_G(I)\subseteq I$, we conclude that $J_G(I)\subseteq J_G(\mathcal{A})\cap I$.
	\end{proof}

	Let $\mathcal{A}$ be a $G$-graded $\mathbb{F}$-algebra. The \emph{unitalization} of $\cA$ is the vector space $A^{\sharp}:=\mathbb{F}\times A$ with the multiplication \[(\lambda, a)(\mu, b)=(\lambda \mu, \mu a+\lambda b+ab),\] for all $\lambda,\mu \in \mathbb{F}$ and all $a, b \in \mathcal{A}$. The unit of the algebra is $(1,0)$, henceforth we identify $\mathbb{F}$ with $\mathbb{F}(1,0)$. We also identify $a$ with the element $(0,a)$ of $A^{\sharp}$. Then we write $A^{\sharp}=\mathbb{F}\oplus A$. Note that $A^{\sharp}=\oplus_{g\in G}A^{\sharp}_g$, where $A^{\sharp}_e=\mathbb{F}\oplus A_e$ and  $A^{\sharp}_g= A_g$ if $g\neq e$, is a $G$-grading on $A^{\sharp}$. Henceforth we consider $A^{\sharp}$ with this grading.

	\begin{proposition}
		If $\mathcal{A}^{\sharp}$ is the unitalization of the finite dimensional $G$-graded algebra $\mathcal{A}$ then $J_G(\mathcal{A}^{\sharp})=J_G(\mathcal{A})$.
	\end{proposition}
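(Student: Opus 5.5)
The plan is to use Lemma \ref{radicalideals} together with Proposition \ref{nilpJGA}, viewing $\mathcal{A}$ as a graded two-sided ideal of $\mathcal{A}^{\sharp}$. First observe that $\mathcal{A}^{\sharp}$ is finite dimensional, and that $\mathcal{A}=\{(0,a)\}$ is a $G$-graded two-sided ideal of $\mathcal{A}^{\sharp}$. It is graded because $\mathcal{A}_g\subseteq \mathcal{A}^{\sharp}_g$ for every $g$, and $\mathcal{A}\cap\mathcal{A}^{\sharp}_e=\mathcal{A}_e$. It is an ideal by the multiplication formula $(\lambda,a)(0,b)=(0,\lambda b+ab)$ and its mirror image. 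Lemma \ref{radicalideals} then gives
\[
J_G(\mathcal{A})=J_G(\mathcal{A}^{\sharp})\cap\mathcal{A}.
\]
So it suffices to show $J_G(\mathcal{A}^{\sharp})\subseteq\mathcal{A}$.

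For this inclusion, I would use that $J=J_G(\mathcal{A}^{\sharp})$ is nilpotent by Proposition \ref{nilpJGA}. Finite dimensionality makes $\mathcal{A}^{\sharp}$ right $G$-Artinian, so the proposition applies. Consider the graded algebra homomorphism
\[
\pi:\mathcal{A}^{\sharp}\to\mathbb{F},\qquad (\lambda,a)\mapsto\lambda,
\]
where $\mathbb{F}$ carries the trivial grading concentrated in degree $e$. It is multiplicative by the formula for the product, and its kernel is $\mathcal{A}$. The image $\pi(J)$ is a nilpotent ideal of the field $\mathbb{F}$, hence zero. Therefore $J\subseteq\ker\pi=\mathcal{A}$, and combining with the displayed equality yields $J_G(\mathcal{A}^{\sharp})=J_G(\mathcal{A})$.

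An alternative argument for the inclusion avoids nilpotency and uses a simple module instead. The space $\mathbb{F}$, graded in degree $e$, with $\mathcal{A}^{\sharp}$ acting via $\pi$, is a graded simple right $\mathcal{A}^{\sharp}$-module. Its annihilator is exactly $\mathcal{A}$, so $J\subseteq\mathcal{A}$ directly.

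I expect no real obstacle. The only points needing care are checking that $\mathcal{A}$ is a graded ideal of $\mathcal{A}^{\sharp}$, so that Lemma \ref{radicalideals} legitimately applies, and that $\mathcal{A}^{\sharp}$ satisfies the Artinian hypothesis of Proposition \ref{nilpJGA}; both follow immediately from finite dimensionality and the definitions.
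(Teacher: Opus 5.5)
Your proof is correct and follows the same route as the paper: Proposition~\ref{nilpJGA} makes $J_G(\mathcal{A}^{\sharp})$ nilpotent, hence contained in $\mathcal{A}$, and Lemma~\ref{radicalideals} applied to the graded ideal $\mathcal{A}$ of $\mathcal{A}^{\sharp}$ then gives the equality. Your augmentation map $\pi$ spells out the step ``nilpotent $\Rightarrow$ contained in $\mathcal{A}$'' that the paper leaves implicit. Your alternative argument is also valid and bypasses Proposition~\ref{nilpJGA} for that inclusion: it uses the one-dimensional graded simple module $\mathbb{F}$, whose annihilator is $\mathcal{A}$.
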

	\begin{proof}
		By Proposition \ref{nilpJGA},  $J_G(A^{\sharp})$ is a nilpotent ideal of $A$, as a consequence $J_G(A^{\sharp})\subseteq A$. Since $\mathcal{A}$ is a graded ideal of $A^{\sharp}$ it follows from Lemma \ref{radicalideals} that \[J_G(A)=J_G(A^{\sharp})\cap A=J_G(A^{\sharp}).\]
	\end{proof}
	
	The previous proposition and \cite[Lemma 2.3]{DDK} imply the following corollary.
	
	\begin{corollary}
		Let $\mathcal{A}$ be a $G$-graded finite dimensional  algebra such that $J_G(\mathcal{A})=\{ 0\}$. Then $\cA$ is a unital algebra.
	\end{corollary}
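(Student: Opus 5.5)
The plan is to combine the preceding Proposition, which gives $J_G(\cA^{\sharp})=J_G(\cA)$, with the cited \cite[Lemma 2.3]{DDK}. I read that lemma as saying that a finite dimensional $G$-graded algebra with zero graded Jacobson radical which is an ideal (or a graded direct summand) of a unital graded algebra is itself unital. The argument is then an idempotent-lifting and splitting argument inside the unitalization $\cA^{\sharp}$, which is unital and finite dimensional.

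\textbf{Step 1.} Assume $J_G(\cA)=\{0\}$. By the preceding Proposition, $J_G(\cA^{\sharp})=\{0\}$. So $\cA^{\sharp}$ is a finite dimensional unital graded algebra that is graded semisimple. As a graded right module over itself, every graded right ideal is then a graded direct summand. I would justify this from Proposition \ref{nilpJGA} as follows: a minimal graded right ideal $I$ satisfies either $I^2=0$, in which case the ideal $\cA^{\sharp}I$ is nilpotent and hence zero, or $I=e\cA^{\sharp}$ for a homogeneous idempotent $e$ of degree $e_G$.

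\textbf{Step 2.} Apply this to the graded two-sided ideal $\cA\subseteq \cA^{\sharp}$. This gives a decomposition $\cA^{\sharp}=\cA\oplus C$ into graded right ideals. Write $1=f+c$ with $f\in\cA_e$ and $c\in C$.

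\textbf{Step 3.} Show that $f$ is a unit of $\cA$. For $a\in\cA$ we have $a=fa+ca$. Since $\cA$ is a two-sided ideal, $ca\in\cA$, and since $C$ is a right ideal, $ca\in C$. Hence $ca\in\cA\cap C=0$, so $fa=a$ and $f$ is a left unit. For the right unit, repeat the argument with a decomposition into graded left ideals; this produces a left-ideal complement $C'$ and an element $f'\in\cA$ with $af'=a$ for all $a\in\cA$. Then $f=ff'=f'$, so this common element is a two-sided unit of $\cA$.

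\textbf{Main obstacle.} The hard part is Step 1's claim of complete reducibility of $\cA^{\sharp}$ from the vanishing of $J_G$. Concretely, one must show that every graded right ideal has a graded complement, which needs the graded analogue of Wedderburn's argument via homogeneous idempotents. This is exactly what \cite[Lemma 2.3]{DDK} supplies, so I would invoke it there rather than reprove it.
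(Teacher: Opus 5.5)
Your proposal is correct and follows the paper's route: pass to the unitalization $\cA^{\sharp}$, use the preceding Proposition to get $J_G(\cA^{\sharp})=J_G(\cA)=\{0\}$, and then rely on the splitting supplied by \cite[Lemma 2.3]{DDK}. The paper leaves everything after that first step to the citation, whereas you spell out the decomposition $\cA^{\sharp}=\cA\oplus C$, the writing $1=f+c$, and the check that $f$ is a two-sided unit; that argument is sound, and the left-ideal version of Step 1 holds by the mirror argument because it only uses the absence of nonzero nilpotent graded ideals.
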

	
	Recall that $\mathcal{A}$ is called {\it $G$-simple}
	if $\mathcal{A}^2\neq \{ 0\}$ and the only $G$-graded  ideals of $\mathcal{A}$ are $\{ 0\}$ and $\mathcal{A}$. Thanks to \cite{DDK}, we have the following.
	\begin{theorem}\label{WAgraded}
		Let $\mathcal{A}$ be a $G$-graded finite dimensional algebra such that $J_G(A)=\{ 0\}$. Then $\mathcal{A}$ is the direct sum of finite-dimensional $G$-simple algebras.\hfill $\Box$
	\end{theorem}
	%\begin{proof}
	%Let $I_0$ be a minimal $G$-graded ideal of $\mathcal{A}$, we claim that $I_0$ is a $G$-simple algebra. Suppose that there is a $G$-graded ideal  $B\neq 0$ of $I_0$,
	%then $I_0BI_0\subseteq B$ is a $G$-graded ideal of $\mathcal{A}$. Note that $I_0B$ is a $G$-graded left ideal of $\mathcal{A}$. By Corollary \ref{unidade1}, $I_0$ has a unit, thus $I_0B\neq \{ 0\}$, Since $J_G(A)=\{ 0\}$, $(I_0B)^2\neq \{ 0\}$, hence $I_0BI_0\neq 0$. Then, by the minimality of $I_0$, we get $I_0BI_0=I_0$, thus $I_0=B$. Hence, $I_0$ is a $G$-simple algebra.\\
	%\indent
	%Now, write $\mathcal{A}=I_0\oplus T_0$, where $T_0$ is a $G$-graded ideal of $\mathcal{A}$. By Proposition \ref{artinianideals} and Remark \ref{artinianideals}, $T_0$ is artininian and $J_G(T_0)=0$. Thus,
	%we can write $T_0=I_1\oplus T_1$, where $I_1$ is a $G$-graded (simple) ideal of $\mathcal{A}$ and $T_1$ is a $G$-graded ideal of $\mathcal{A}$. Reapeting the process, we construct a descending chain of ideals
	%\[T_0\supseteq T_1\supseteq\cdots \supseteq T_k\supseteq\cdots\]
	%of $G$-graded ideals. As $\mathcal{A}$ is $G$-graded Artinian, we have that this chain becomes stationary. Then, there is $s\geq 0$, such that $T_s=T_k$ for $k\geq s$. This implies that $\mathcal{A}=I_0\oplus I_1\oplus\cdots \oplus I_s$ and the result follows.
	%\end{proof}

	The structure of $G$-graded semisimple algebras becomes completely clear if one applies the following.
	
	\begin{theorem}\cite[Theorem 2.6]{EK}\label{Gradedsimplealgebra}
		Let $\mathcal{A}$ be a $G$-graded algebra. If $\mathcal{A}$ is $G$-simple and satisfies the descending chain condition on $G$-graded right ideals, then there exists a $G$-graded algebra $\mathcal{D}$ and a graded left right $\mathcal{D}$-module $V$ such that $\mathcal{D}$ is a graded division algebra, $V$ is finite-dimensional over $\mathcal{D}$ and $\mathcal{A}$ is isomorphic to $\textrm{End}_{\mathcal{D}}(V)$.
	\end{theorem}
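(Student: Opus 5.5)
The plan is to run the classical Wedderburn--Artin argument in the graded category. First we produce a graded simple module $V$ on which $\mathcal{A}$ acts faithfully. By the descending chain condition, $\mathcal{A}$ has a minimal nonzero $G$-graded right ideal $V$.

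Next we check that $V\mathcal{A}\neq\{0\}$. The left annihilator $\{a\in\mathcal{A}\mid a\mathcal{A}=0\}$ is a $G$-graded two-sided ideal. If it contained $V$ it would be nonzero, hence equal to $\mathcal{A}$ by $G$-simplicity, giving $\mathcal{A}^2=0$, a contradiction. Hence $V\mathcal{A}\neq 0$, and by minimality $V$ is a $G$-graded simple right $\mathcal{A}$-module.

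Its annihilator $\Ann_G(V)$ is a graded two-sided ideal different from $\mathcal{A}$, since $V\mathcal{A}\neq 0$. By $G$-simplicity it is zero, so $\mathcal{A}$ embeds into the endomorphisms of $V$.

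Second, we define $\mathcal{D}=\bigoplus_{g\in G}\mathcal{D}_g$, where $\mathcal{D}_g$ consists of the $\mathcal{A}$-module endomorphisms $d$ of $V$ with $d(V_h)\subseteq V_{gh}$ for all $h$. These act on the left of $V$, so $V$ is a graded $(\mathcal{D},\mathcal{A})$-bimodule. Because $V$ is finite-dimensional (it sits inside $\mathcal{A}$, which satisfies the hypotheses of the setting), every $\mathcal{A}$-endomorphism decomposes into homogeneous components. We then prove a graded Schur lemma. For a nonzero homogeneous $d\in\mathcal{D}_g$, the kernel and image of $d$ are graded submodules of $V$, so $d$ is bijective and $d^{-1}\in\mathcal{D}_{g^{-1}}$. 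Hence $\mathcal{D}$ is a graded division algebra. The action of $\mathcal{A}$ commutes with $\mathcal{D}$, which gives a graded algebra homomorphism $\varphi\colon\mathcal{A}\to \mathrm{End}_{\mathcal{D}}(V)$, injective by faithfulness.

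Third, we show that $V$ has finite rank over $\mathcal{D}$ and that $\varphi$ is surjective. Every graded $\mathcal{D}$-module is free on a homogeneous basis, so we may work with homogeneous $\mathcal{D}$-independent elements. Suppose $v_1,v_2,\dots$ were infinitely many such elements. The graded right ideals $\{a\mid v_1a=\dots=v_ka=0\}$ then form a descending chain. The chain is strictly decreasing once we have the density statement below, which contradicts DCC. So $\dim_{\mathcal{D}}V<\infty$, and density then yields surjectivity of $\varphi$.

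The main obstacle is the graded Jacobson density theorem. Its statement is: for homogeneous $\mathcal{D}$-independent $v_1,\dots,v_k$ and arbitrary $w_1,\dots,w_k\in V$, there exists $a\in\mathcal{A}$ with $v_ia=w_i$. We plan to prove it by induction on $k$ via the key lemma: if $U$ is a finitely generated graded $\mathcal{D}$-submodule and $v\notin U$ is homogeneous, then some $a$ annihilates $U$ but not $v$.

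In the induction step we consider the graded right ideal $L=\Ann(U)$, which by induction acts nontrivially. If $vL=0$, we define a map $uL\mapsto vL$ for $u$ in a suitable graded complement. Using that $V=wL$ for any nonzero homogeneous $w$ (by graded simplicity), this map gives a well-defined homogeneous element of $\mathcal{D}$ expressing $v$ in terms of $U$, a contradiction.

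The delicate points are keeping track of degree shifts in this construction and ensuring the resulting map is genuinely graded. Once density holds, $\varphi$ is onto $\mathrm{End}_{\mathcal{D}}(V)$ because $V$ has a finite homogeneous $\mathcal{D}$-basis. Therefore $\mathcal{A}\cong\mathrm{End}_{\mathcal{D}}(V)$ as graded algebras.
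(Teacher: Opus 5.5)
Your proposal is correct and follows the standard graded Wedderburn--Artin route (minimal graded right ideal, graded Schur lemma, graded density theorem), which is the argument behind the cited \cite[Theorem 2.6]{EK}; the paper itself offers nothing beyond that citation. When writing it up, repair the key-lemma step: with $U=U'\oplus\mathcal{D}u$ and $u$ homogeneous, take $L=\Ann(U')$ rather than $\Ann(U)$, assume $va=0$ for every $a\in L$ with $ua=0$, and use $uL=V$ because $uL$ is a nonzero graded submodule (your claim that $wL=V$ for every nonzero homogeneous $w$ is false, e.g.\ for $w\in U'$); also drop the appeal to finite-dimensionality of $V$, which is not a hypothesis and is not needed since $\mathcal{D}=\bigoplus_{g}\mathcal{D}_g$ is graded by construction.
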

\begin{remark}\label{inducedgrading}
Using the notation of Theorem \ref{Gradedsimplealgebra} and fixing $\{v_1,\ldots, v_n\}$ a homogeneous $\mathcal{D}$-basis of $V$. Suppose that $g_i$ is degree of $v_i$. Then, we can identify  $\textrm{End}_{\mathcal{\mathcal{D}}}(V)$ with $M_n(\mathcal{D})$ in natural way. Moreover, it is convenient identify $M_n(\mathcal{D})$ with the tensor product $M_n(\F)\otimes\mathcal{D}$ via Kronecker
product, this is, the element $(\lambda_{ij})\otimes d\in M_n(\F)\otimes\mathcal{D}$ with the element $(\lambda_{ij} d)\in M_n(\mathcal{D})$. Note that $G$-grading is given by
\[\deg (E_{ij}\otimes d)= g_{i} (\deg d )g_j^{-1},\]
where $d$ is a homogeneous element of $\mathcal{D}$. This grading on $M_n(\mathcal{D})$ is called the {\it $G$-grading induced} by $\mathcal{D}$. For more details, see \cite[Chapter 1]{EK} and \cite{BZ}.
\end{remark}

We conclude with a technical result, well-known for ordinary Artinian algebras.

\begin{lemma}\label{indexepimorphism}
Let $\mathcal{A}$, $\mathcal{B}$ be $G$-graded finite algebras. Suppose that there is a $G$-graded epimorphism $\psi: \mathcal{A}\rightarrow \mathcal{B}$. Then, 
\[\psi(J_G(\mathcal{A}))= J_G(\mathcal{B})\]
\end{lemma}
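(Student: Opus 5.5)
We prove the two inclusions separately, with Proposition \ref{nilpJGA} and Theorem \ref{WAgraded} doing most of the work. Since $\mathcal{A}$ and $\mathcal{B}$ are finite, they are finite dimensional and hence $G$-Artinian. So $J_G(\mathcal{A})$ is the largest nilpotent graded ideal of $\mathcal{A}$, and similarly for $\mathcal{B}$.

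For $\psi(J_G(\mathcal{A}))\subseteq J_G(\mathcal{B})$, we use surjectivity. Because $\psi$ is a graded epimorphism, $\psi(J_G(\mathcal{A}))$ is a $G$-graded subspace of $\mathcal{B}$: each homogeneous component of $J_G(\mathcal{A})$ maps into $\mathcal{B}_g$. It is also a two-sided ideal, since $\psi(\mathcal{A})=\mathcal{B}$. If $J_G(\mathcal{A})^n=0$, then $\psi(J_G(\mathcal{A}))^n=\psi(J_G(\mathcal{A})^n)=0$. So $\psi(J_G(\mathcal{A}))$ is a nilpotent graded ideal of $\mathcal{B}$, and the maximality statement in Proposition \ref{nilpJGA} gives the inclusion.

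For the reverse inclusion, set $J=J_G(\mathcal{A})$ and $I=\ker\psi$, which is a graded ideal. The map $\psi$ induces a graded epimorphism
\[\bar\psi:\mathcal{A}/J\to \mathcal{B}/\psi(J),\]
so it suffices to show that $\mathcal{B}/\psi(J)$ has zero graded radical. Then $J_G(\mathcal{B})/\psi(J)$, which is a nilpotent graded ideal of $\mathcal{B}/\psi(J)$, must vanish, giving $J_G(\mathcal{B})\subseteq\psi(J)$.

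By the lemma $J_G(\mathcal{A}/J_G(\mathcal{A}))=\{0\}$ and Theorem \ref{WAgraded}, we can write
\[\mathcal{A}/J=\mathcal{S}_1\oplus\cdots\oplus\mathcal{S}_k,\]
a direct sum of $G$-simple graded ideals, each of which is unital by the corollary. The key step, and the main obstacle, is to show that every graded ideal $K$ of this semisimple algebra is a sum of some of the $\mathcal{S}_i$, and hence that the quotient is again a direct sum of $G$-simple algebras. I would first try the following argument. Let $e_i$ be the unit of $\mathcal{S}_i$; it lies in the identity component, because the unit of a graded algebra is homogeneous of degree $e$. Then $e_iK\subseteq K\cap\mathcal{S}_i$ is a graded ideal of $\mathcal{S}_i$, so it is either $0$ or $\mathcal{S}_i$. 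Moreover $K=\sum_i e_iK$, since $1=\sum e_i$ acts as the identity on $\mathcal{A}/J$.

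Applying this with $K=\bar\psi$'s kernel, the quotient $\mathcal{B}/\psi(J)$ is isomorphic to a direct sum of some of the $\mathcal{S}_i$. Suppose $N$ is a nilpotent graded ideal of this sum. By the same decomposition, $N$ is a sum of some $\mathcal{S}_i$, and each of these is unital and nonzero, hence not nilpotent. So $N=0$, and by Proposition \ref{nilpJGA} the graded radical of $\mathcal{B}/\psi(J)$ is zero, which completes the reverse inclusion.
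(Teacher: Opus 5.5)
Your proof is correct and follows essentially the route the paper indicates, namely Proposition \ref{nilpJGA}, Theorem \ref{WAgraded}, and Lvov's argument. In that route, $\psi(J_G(\mathcal{A}))$ is a nilpotent graded ideal of $\mathcal{B}$, and $\mathcal{B}/\psi(J_G(\mathcal{A}))$ is a quotient of $\mathcal{A}/J_G(\mathcal{A})$, hence again a direct sum of $G$-simple algebras with no nonzero nilpotent graded ideals. Your use of the degree-$e$ units $e_i$ to show that every graded ideal of $\mathcal{A}/J_G(\mathcal{A})$ is a sum of simple components fills in the one step the paper leaves implicit.
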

\begin{proof}
The result follows from  Proposition \ref{nilpJGA}, Theorem \ref{WAgraded} and the proof is analogous to that of \cite[Lemma 2.12]{Lvov}.
%\black
%First, observe that
%\[\mathcal{A}/(J_G(\mathcal{A})+ \textrm{ker}(\psi))\cong \mathcal{B}/\psi(J_G(\mathcal{A})).\]
%Moreover, $\mathcal{A}/(J_G(\mathcal{A})+ \textrm{ker}(\psi))$ is a homomorphic image of the $G$-graded algebra $\mathcal{A}/ J_G(\mathcal{A})$. In fact
%\[\mathcal{A}/(J_G(\mathcal{A})+ \textrm{ker}(\psi))\cong \dfrac{\mathcal{A}/ J_G(\mathcal{A})}{(J_G(\mathcal{A})+ \textrm{ker}(\psi))/J_G(\mathcal{A})}.\]
%By Theorem \ref{WAgraded},  $\mathcal{A}$ decomposes as the direct sum of $G$-graded simple algebras: 
%\[\mathcal{A}/ J_G(\mathcal{A})=\bigoplus_{i=1}^t \mathcal{A}_i,\]
%where all $\mathcal{A}_i$ are $G$-graded simple algebras, for $1\leq i\leq t$. Since $(J_G(A)+ \textrm{ker}(\psi))/J_G(\mathcal{A})$ is a $G$-graded ideal of $\mathcal{A}/ J_G(\mathcal{A})$, it  is the direct sum of some subset of the $G$-graded algebras $\mathcal{A}_i$. Hence,  $\mathcal{B}/\psi(J_G(\mathcal{A}))$ is direct sum of $G$-graded simple algebras. Thus, $\mathcal{B}/\psi(J_G(\mathcal{A}))$ does not contain $G$-graded nilpotent ideals. Then, $J_G(\mathcal{A})$ is mapped to zero under the natural epimorphism $\pi: \mathcal{B}\rightarrow \mathcal{B}/\psi(J_G(\mathcal{A}))$. Hence $J_G(\mathcal{B})\subseteq \psi(J_G(\mathcal{A}))$. On the other hand, since $\psi(J_G(\mathcal{A}))$ is a $G$-graded nilpotent ideal of $\mathcal{B}$,  so that the opposite inclusion holds. 
\end{proof}
%We highlight that Lemma \ref{indexepimorphism} remains valid for any finite algebras $\mathcal{A}$ and $\mathcal{B}$ in a variety admitting largest nilpotent ideals $N$ and $M$ of $\mathcal{A}$ and $\mathcal{B}$, respectively, such that the quotients $\mathcal{A}/N$ and $\mathcal{B}/M$ are direct sums of simple algebras, see \cite{Lvov}.
\subsection{Graded polynomial identities}
Let $G$ be a group, for each $g\in G$, consider the infinite countable set $X_g=\{x_1^{(g)},x_2^{(g)},\ldots,\}$ and denote by $X_G=\bigcup_{g\in G} X_g$. Let $\F\langle X_G\rangle$ be the free associative $G$-graded algebra, freely generated by $X_G$. Recall that $\F\langle X_G\rangle$ is the free associative algebra, endowed with the $G$-grading
\[\F\langle X_G\rangle=\bigoplus_{g\in G} (\F\langle X_G\rangle)_g,\]
satisfying the following condition: $\textrm{deg}_G(x_i^{(g)})= g$, for all $i\geq 1$ and $g\in G$.

A $T_G$-{\it ideal} of $\F\langle X_G\rangle$ is a $G$-graded ideal of $\F\langle X_G\rangle$ closed under all $G$-graded endomorphisms of $\F\langle X_G\rangle$. Given $S\subseteq \F\langle X_G\rangle$, we denote by $\langle S\rangle_{T_G}$, the $T_G$-ideal generated by $S$, this is the smallest $T_G$-ideal that contains $S$. If $f\in\langle S\rangle_{T_G}$, we say that $f$ is a {\it consequence} of  the polynomials in $S$.
\begin{definition}
Let $J$ be a $T_G$-ideal and consider  $S\subseteq J$. If $J= \langle S\rangle_{T_G}$, we say that $S$ is a basis of $J$ as $T_G$-ideal. 
\end{definition}
Note that we do not require $S$ to be minimal; thus the whole $J$ is a basis of $J$. %Clearly such a basis is of little value and does not contribute much to our knowledge; we are interested in ``small'' sets $S$.  
If the basis $S$ is finite, we say that the $T_G$-ideal $J$ satisfies {\it the finite basis property (f.b.p)}.\\
\indent
We recall that a polynomial $f(x_1^{(g_1)},\ldots,x_k^{(g_k)})\in \F\langle X_G\rangle$ is 
{\it$G$-graded polynomial identity} for a $G$-graded algebra $\mathcal{A}$, if
$f(a_1,\ldots, a_k)=0$, for any $a_i\in\mathcal{A}_{g_i}$ and $1\leq i\leq k$. We say that $\mathcal{A}$ is a {\it graded PI-algebra}, if it satisfies a non-trivial $G$-graded polynomial identity.

The set of all $G$-graded polynomial identitities of $\mathcal{A}$ is a $T_G$-ideal, denoted by $T_G(\mathcal{A})$. Thus, a variety of $G$-graded algebras is the class of algebras defined by a set of $G$-graded polynomial identities. Given a $G$-graded PI-algebra $\mathcal{A}$, we denote by $\var(\mathcal{A})$, the variety $\V$ of $G$-graded algebras defined by $T_G(\mathcal{A})$. In this case, $\V$ is called \emph{finitely generated}.
\begin{definition}
Let $\V$ be a variety of $G$-graded algebras and let $T_G(\V)=\bigcap_{\mathcal{A}\in\V} T_G(A)$. We say that $\V$ is finitely based, if $T_G(\V)$ satisfies f.b.p., that is, $\V$ can be defined by a finite number of $G$-graded polynomial identities. A finitely based variety $\V$ of $G$-graded algebras is called \emph{Specht}  if each of its subvarieties is finitely based.
\end{definition}
In particular, note that $\var(\mathcal{A})$ is finitely based, if $T_G(\mathcal{A})$ admits a finite basis as $T_G$-ideal. 
\begin{definition}
Let $\V$ be a variety of $G$-graded algebras. The 
\emph{$G$-graded relatively free algebra} $F_d(\V)$ of rank $d\,|G|\,$ in $\V$ 
is defined as follows. Let $X_{G,d}$ be $G$-graded set
\[
X_{G,d}=\{\, x_1^{(g)},\ldots,x_d^{(g)} \mid g\in G \,\},
\]
where each  $x_i^{(g)}$ is homogeneous of degree $g$. Then,
\[F_d(\V)=\F\langle X_{G,d}\rangle
\Big/\big(T_G(\V) \cap \F\langle X_{G,d}\rangle\big).
\]
where $T_G(\V)$ denotes the $G$-graded $T$-ideal of graded polynomial 
identities of $\V$.
\end{definition}

Every algebra in $\V$ which can be generated by $d$-graded elements is a factor-algebra of $F_d(\V)$.

\section{Cross Varieties}
We fix a set $\Omega=\bigcup_{m=0}^{\infty} \Omega_m\neq{\varnothing}$. A set $\mathcal{A}$ is called a {\it universal algebra} if every $\omega\in\Omega_{m}$ defines an $m$-ary operation on $\mathcal{A}$, that is, a map $w:\underbrace{\mathcal{A}\times\cdots\times\mathcal{A}}_m\mapsto \mathcal{A}$. The set $\Omega$ is called a {\it signature}. If we require that $\mathcal{A}$ has the structure of a vector space and that the operations are $m$-linear for each $\omega \in \Omega_m$, then $\mathcal{A}$ is called a {\it linear  $\Omega$-algebra}.

\medskip

In \cite{BY}, it was noticed that every $G$-graded algebra is a universal algebra. Indeed, let $\F$ be a field, $G$ any group and $\mathcal{A}$ an algebra over $\F$, in the usual sense, that is, with one binary linear operation $\mu$. It is not necessary that $\mathcal{A}$ is associative or has finite dimension. Consider a $G$-grading on $\mathcal{A}$,
\(\mathcal{A}=\bigoplus_{g\in G} \mathcal{A}_g.\)
Then, for any $g\in G$, we have the natural projection $\pi_g:\mathcal{A}\rightarrow\mathcal{A}$ given by
\begin{equation}\label{Gprojection}
\pi_g\left(\sum_{h\in G}a_h\right)=a_g,
\end{equation}
where $a_h\in\mathcal{A}_h$, for any $h\in G$. Moreover, for any $\lambda\in\F$, we can define $f_{\lambda}: \mathcal{A}\rightarrow \mathcal{A}$, given by $f_{\lambda}(a)=\lambda a$ for all $a\in\mathcal{A}$. Thus, consider the set $\Omega=\Omega_1\cup\Omega_2$, where $\Omega_1=\{\pi_g, f_{\lambda}\mid g\in G,\, \lambda\in\F\}$, $\Omega_2=\{\mu\,, +\}$. In this way, we view $\mathcal{A}$ as a universal algebra, where, $\mu$ and $+$ are binary operations, the original product and the sum on $\mathcal{A}$, respectively. Each $\pi_g$ is a unary operation given by (\ref{Gprojection}) and any $f_{\lambda}$ is also unary operation. In particular, any $G$-graded algebra is a $\Omega$-algebra.

\medskip

In this section we mostly translate some definitions and results from the seminal paper of Lvov \cite{Lvov}, which deals with the varieties of  finite associative algebras, into the language of \emph{graded} associative algebras. A large initial part of that paper is devoted to much more general finite algebras. As a matter of fact, the author suggests that the word \emph{algebra} will be used for $\Omega$-algebras. To avoid possible confusion, we decided to list out Lvov's result, which are specifically true for graded algebras.

\medskip

%So we fix a finite field $\F$ and a finite group $G$. All the algebras will be considered over $\F$. The aim of this section is to present some definitions and remarks which are of general use in the theory of varieties of universal algebras, formulated in the setting of $G$-graded algebras. For more details, see \cite[Section 1]{Lvov}.

 In what follows, will be dealing with the varieties $\V$ of graded associative algebras in which every finitely generated $G$-graded algebra is finite. Such varieties are called \emph{locally finite}. In \cite[Lemma 1.1]{Lvov} it is noted that every locally finite variety of $G$-graded algebras is generated by its finite $G$-graded algebras. Conversely, a variety generated by one or finitely many finite algebras is locally finite. These facts immediately follow from Birkhoff's Theorem \cite{Birk}.
    
    To minimize the generating set of a variety, one introduces the notion of a \emph{critical algebra}. Given a $G$-graded algebra $\mathcal{A}$, a \emph{graded section} of $\mathcal{A}$ is a quotient $\mathcal{B}/\mathcal{C}$, where $\mathcal{B}$ is a $G$-graded subalgebra of $\mathcal{A}$ and $\mathcal{C}$ is a $G$-graded ideal of $\mathcal{B}$.  The graded section is \emph{proper} unless $\mathcal{B}=\mathcal{A}$ and $\mathcal{C}=0$. A $G$-graded algebra $\mathcal{A}$ is \emph{critical} if it is finite and does not belong to the variety generated by its proper $G$-graded sections. Every critical algebra $\cA$ is \emph{monolithic} in the sense that the intersection $M(\cA)$ of all nonzero ideals of $\cA$ is nonzero. One calls $M(\cA)$ the monolith of $\cA$. 

Basic general properties of critical $\Omega$-algebras are proven in \cite[$\S\,1$]{Lvov}. In the particular case of graded algebras, we have the following.
 
\begin{proposition}\label{locallyfinite2}
Let $\V$ be a variety of $G$-graded algebras. The following statements are true:
\begin{enumerate} 
  \item[(i)] If $\V$ is locally finite, then it is generated by its own critical algebras.
  \item[(ii)] If  $\V$ is locally finite variety and let $\V'$ be a proper subvariety. Then there is a critical algebra $\mathcal{A}\in\V$ that does not belong to $\V'$.
  \item[(iii)] If $\V$ is a locally finite variety of $G$-graded algebras with exactly $n$ critical $G$-graded algebras. Then $\V$ has only finitely many subvarieties, and in fact their number does not exceed $2^n$.
  \item[(iv)] If all the $s$-generated $G$-graded algebras in $\V$ are finite, then there are only a finite number of such algebras in $\V$ (up to isomorphism).
  \item[(v)] If $\V$ is a locally finite variety of $G$-graded algebras. Then, $\V$ has only a finite number of subvarieties if and only if it has a finite number of $G$-graded critical algebras.
  \item[(vi)]  If a variety $\V$ of $G$-graded algebras is finitely based and contains only a finite number of subvarieties, then $\V$ is a Specht variety.
  \end{enumerate}   
\end{proposition}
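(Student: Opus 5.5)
We prove the six items in order, following \cite[$\S\,1$]{Lvov} and treating $G$-graded algebras as $\Omega$-algebras with the signature $\Omega=\{\mu,+,\pi_g,f_\lambda\}$ above. Under this translation, graded subalgebras, graded ideals, graded quotients and graded homomorphisms are exactly $\Omega$-subalgebras, congruences, quotients and $\Omega$-homomorphisms. Varieties of $G$-graded algebras are then varieties of $\Omega$-algebras satisfying the associativity and grading axioms, and Birkhoff's theorem \cite{Birk} applies.

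For (i), let $\mathcal{W}$ be the subvariety generated by the critical algebras of $\V$. It is enough to show that every finite $\mathcal{A}\in\V$ lies in $\mathcal{W}$, since a locally finite variety is generated by its finite algebras. We argue by induction on $|\mathcal{A}|$. If $\mathcal{A}$ is critical there is nothing to prove. Otherwise $\mathcal{A}$ lies in the variety generated by its proper graded sections. These are strictly smaller, so by induction they lie in $\mathcal{W}$, and hence so does $\mathcal{A}$.

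For (ii), $\V'\neq\V$ means some finite algebra of $\V$ is not in $\V'$. Take one of minimal order. All its proper sections are smaller, so they lie in $\V'$. It follows that this algebra is not in the variety generated by its proper sections, i.e.\ it is critical.

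For (iii), by (ii) applied to the pair $\V''\supsetneq\V'$ (note $\V''$ is again locally finite), distinct subvarieties contain distinct sets of critical algebras of $\V$. Concretely, by (i) applied to $\V'$, every subvariety $\V'$ is generated by the critical algebras of $\V$ lying in $\V'$. This gives an injection from subvarieties into subsets of an $n$-element set, hence at most $2^n$ subvarieties.

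For (iv), every $s$-generated algebra in $\V$ is a quotient of $F_s(\V)$. The latter is $s|G|$-generated, hence finite, so it has only finitely many quotients.

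For (v), one direction is (iii). Conversely, suppose there are infinitely many pairwise non-isomorphic critical algebras $\mathcal{A}_1,\mathcal{A}_2,\dots$. Each is finite, and by (iv) there are only finitely many of each size up to isomorphism, so we may reorder them so that their orders are nondecreasing. Set $\V_k=\var(\mathcal{A}_1,\dots,\mathcal{A}_k)$. This is the \emph{key step}: we must show that a critical $\mathcal{B}$ cannot lie in a variety generated by finitely many critical algebras of order $\le|\mathcal{B}|$ none isomorphic to $\mathcal{B}$. We try to prove this via the local-finiteness argument of \cite[Lemma 1.4]{Lvov}. Write $\mathcal{B}$ as a section of a finite power (using $\mathcal{B}$ finitely generated and local finiteness). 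Monolithicity of $\mathcal{B}$ then forces $\mathcal{B}$ to be a section of a single factor $\mathcal{A}_i$; here the graded ideals play the role of congruences, and we need the intersection of the kernels of the projections. Since $|\mathcal{A}_i|\le|\mathcal{B}|$, this is only possible if $\mathcal{B}\cong\mathcal{A}_i$. Granting this, the varieties $\V_k$ are pairwise distinct, giving infinitely many subvarieties. The main obstacle is precisely this sections-of-products argument, since it must be carried out with graded ideals.

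For (vi), let $\V'\subseteq\V$. Every subvariety of $\V$ is cut out from the finitely based $\V$ by additional identities. The subvarieties form a finite lattice, so the descending chain obtained by adding identities of $\V'$ one at a time stabilizes after finitely many steps. Thus $\V'$ is defined by the finite basis of $\V$ together with finitely many extra identities, i.e.\ $\V'$ is finitely based.
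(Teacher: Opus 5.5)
Your arguments for (i), (ii), (iii) and (vi) are the standard ones from \cite[$\S\,1$]{Lvov}; the paper itself gives no proof and simply refers there. However, the converse half of (v) rests on a claim that is false. You assert that a critical $\mathcal{B}$ cannot lie in the variety generated by finitely many critical algebras of order $\le|\mathcal{B}|$, none isomorphic to $\mathcal{B}$.

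\emph{Counterexample.} Take the trivial grading over $\F_2$. Let $\mathcal{A}=x\F_2[x]/(x^3)$, with basis $x,x^2$. Let $\mathcal{C}$ have basis $u,v,w$, with $uv=vu=w$ and all other products of basis elements zero.
\begin{enumerate}
\item[(a)] Both are critical, because every proper section of either has zero multiplication. In $\mathcal{C}$ one has $a^2=0$ for all $a$; a subalgebra containing $a,b$ with $ab\neq 0$ is all of $\mathcal{C}$; and every nonzero ideal contains $w$.
\item[(b)] Nevertheless $\mathcal{C}\in\var(\mathcal{A})$. In $\mathcal{A}^3$, the elements $u=(x,x,0)$ and $v=(0,x,x)$ generate a subalgebra $S$ with basis $u,v,u^2,v^2,uv$. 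The span $I$ of $u^2,v^2$ annihilates $S$, hence is an ideal, and $S/I\cong\mathcal{C}$.
\end{enumerate}
Thus $|\mathcal{A}|=4<8=|\mathcal{C}|$ and $\mathcal{A}\not\cong\mathcal{C}$, yet $\mathcal{C}\in\var(\mathcal{A})$, so your chain $\V_k$ need not increase strictly. For groups, $Q_8\in\var(D_8)$ is the classical instance.

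\emph{Where the monolith argument breaks.} Write $\mathcal{B}\cong S/N$ with $S\le\prod_j\mathcal{A}_{i_j}$, and let $K_j$ be the kernels of the projections. The fact $\bigcap_j K_j=0$ does not give $\bigcap_j(N+K_j)=N$. If no $K_j\subseteq N$, each $N+K_j$ merely contains the preimage of the monolith, and there is no contradiction. Reducing to a single factor is J\'onsson's lemma. It needs congruence distributivity and is false for associative (graded) algebras, as $\mathcal{C}$ shows.

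\emph{The missing idea.} Bound the number of homogeneous generators of critical algebras, and then apply (iv).
\begin{enumerate}
\item[(1)] Since $\V$ has finitely many subvarieties, your chain argument from (vi) applies; it never uses that $\V$ is finitely based. It shows that every subvariety $\mathcal{W}$ is defined inside $\V$ by finitely many identities.
\item[(2)] Let $k$ bound the number of variables occurring in all of these finitely many identities.
\item[(3)] Suppose a critical $\mathcal{B}\in\V$ could not be generated by $k$ homogeneous elements. Then every subalgebra generated by $k$ homogeneous elements is a proper section, so it lies in the subvariety $\mathcal{W}$ generated by the proper sections of $\mathcal{B}$.
\item[(4)] The identities defining $\mathcal{W}$ inside $\V$ involve at most $k$ variables, so $\mathcal{B}$ satisfies them. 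Hence $\mathcal{B}\in\mathcal{W}$, contradicting criticality.
\end{enumerate}
Therefore every critical algebra of $\V$ is $k$-generated, and (iv) shows there are finitely many.

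\emph{A minor point in (iv).} The hypothesis concerns $s$-generated algebras, but you use finiteness of $F_s(\V)$, which needs $s|G|$ homogeneous generators. Either view $F_s(\V)$ as generated by the $s$ elements $\sum_g x_i^{(g)}$ in the $\Omega$-algebra sense. Or use the relatively free algebras on $s$ homogeneous generators, one for each of the finitely many degree patterns.
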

An important definition, which appears in all papers on f.b.p. in finite algebras is this.
\begin{definition}
Let $\V$ be a variety of $G$-graded algebras. We say that $\V$ is a \emph{Cross variety} if it satisfies the following conditions:
\begin{itemize}
    \item[(i)] $\V$ is locally finite;
    \item[(ii)] $\V$ admits a finite basis of $G$-graded polynomial identities;
    \item[(iii)] There are only a finite number of non-isomorphic $G$-graded critical algebras. 
\end{itemize}
\end{definition}

\begin{theorem}\label{crosssubvariety}
Let $\V$ be a  Cross variety of $G$-graded algebras. Then $\V$ is a Specht variety. Moreover, any subvariety of $\V$ is also Cross.
\end{theorem}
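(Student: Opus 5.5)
The plan is to prove the two assertions in reverse order. First we show that every subvariety $\mathfrak{U}$ of a Cross variety $\V$ is again Cross. Then the Specht property follows, because every subvariety is finitely based by condition (ii) of the definition applied to that subvariety.

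Let $\mathfrak{U}\subseteq\V$ be a subvariety. We check the three conditions for $\mathfrak{U}$ in turn.

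\emph{Local finiteness.} A finitely generated $G$-graded algebra in $\mathfrak{U}$ lies in $\V$, so it is finite. Hence $\mathfrak{U}$ is locally finite.

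\emph{Finiteness of critical algebras.} Criticality is an intrinsic property of a finite $G$-graded algebra: $\mathcal{A}$ is critical if it is not in the variety generated by its own proper graded sections, and this does not refer to the ambient variety. So the critical algebras of $\mathfrak{U}$ are exactly the critical algebras of $\V$ that lie in $\mathfrak{U}$. This gives a finite set, of size at most $n$, where $n$ is the number of critical algebras of $\V$.

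\emph{Finite basis.} This is the main step. Since $\V$ is locally finite with finitely many critical algebras, Proposition \ref{locallyfinite2}(iii) (equivalently (v)) shows that $\V$ has only finitely many subvarieties, at most $2^n$. Moreover $\V$ is finitely based by assumption. Proposition \ref{locallyfinite2}(vi) then says directly that $\V$ is Specht, so every subvariety, in particular $\mathfrak{U}$, is finitely based.

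If one prefers not to cite (vi) as a black box, its proof runs as follows.
\begin{enumerate}
\item Let $S$ be a finite basis of $T_G(\V)$.
\item By Proposition \ref{locallyfinite2}(i), $\mathfrak{U}$ is generated by its critical algebras, which form a subset of the finite list $\mathcal{A}_1,\dots,\mathcal{A}_n$ of critical algebras of $\V$.
\item For each $\mathcal{A}_i\in\V\setminus\mathfrak{U}$, choose one identity $f_i\in T_G(\mathfrak{U})\setminus T_G(\mathcal{A}_i)$.
\item Let $\mathfrak{W}$ be the variety defined by $S\cup\{f_i\}$. Then $\mathfrak{U}\subseteq\mathfrak{W}\subseteq\V$, and $\mathfrak{W}$ contains no critical algebra of $\V$ outside $\mathfrak{U}$.
\item By Proposition \ref{locallyfinite2}(i) applied to $\mathfrak{W}$, which is locally finite, $\mathfrak{W}$ is generated by critical algebras lying in $\mathfrak{U}$. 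Hence $\mathfrak{W}\subseteq\mathfrak{U}$.
\item Therefore $\mathfrak{W}=\mathfrak{U}$, and $\mathfrak{U}$ is defined by the finite set $S\cup\{f_i\}$.
\end{enumerate}

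This completes the verification that $\mathfrak{U}$ is Cross, which gives both claims of the theorem.

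The main point needing care is the claim that criticality does not depend on the ambient variety. Together with the use of Proposition \ref{locallyfinite2}(i) for the locally finite variety $\mathfrak{W}$, this is what makes the argument work. Both follow from the fact that graded sections of an algebra in a variety stay in that variety.
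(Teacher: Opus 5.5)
Your proposal is correct and follows the route the paper implicitly relies on. The paper states this theorem without proof, as a consequence of Lvov's general results collected in Proposition \ref{locallyfinite2}(i), (iii), (vi): subvarieties inherit local finiteness, their critical algebras are among the finitely many critical algebras of $\V$, and $\V$, being finitely based with at most $2^n$ subvarieties, is Specht. Your unpacked steps 1--6 are also a valid self-contained proof of (vi) in this setting, and they show directly that each subvariety is finitely based without counting subvarieties.
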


This theorem provides an important tool for the proof of f.b.p. in a finite graded algebra.
\begin{remark}\label{identitiescross}
Let $\mathcal{A}$ be a $G$-graded algebra. Suppose that there are $G$-graded identities $f_1,\ldots, f_k$ of $\mathcal{A}$ such that the variety  $\V$ of $G$-graded algebras defined by them is locally finite and  has finite number of non-isomorphic critical algebras. Therefore $\V$ is a Cross variety, then by Theorem \ref{crosssubvariety}, $\var(\mathcal{A})$ is also Cross. Hence, $\mathcal{A}$ admits a finite basis of $G$-graded identities.
\end{remark}
Let $\mathcal{A}$ be a $G$-graded algebra. If $\{I_i\mid i\in\Lambda\}$ is a set of nonzero $G$-graded ideals such that 
$\bigcap_{i\in\Lambda} I_i=\{ 0\}$, then $\mathcal{A}$ is isomorphic to a subdirect product of its proper sections $\mathcal{A}/I_i$, $i\in\Lambda$. Hence the following is true.
\begin{proposition}
A $G$-graded critical algebra is subdirectly irreducible as a $G$-graded algebra.
\end{proposition}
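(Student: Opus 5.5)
We argue by contraposition, using the observation made just before the statement: if a graded algebra has nonzero graded ideals with trivial intersection, it embeds in a subdirect product of the corresponding quotients. Let $\mathcal{A}$ be a critical $G$-graded algebra and suppose, toward a contradiction, that $\mathcal{A}$ is not subdirectly irreducible as a $G$-graded algebra. Subdirect irreducibility fails precisely when the intersection of all nonzero $G$-graded ideals of $\mathcal{A}$ is $\{0\}$. So the family $\{I_i\mid i\in\Lambda\}$ of all nonzero $G$-graded ideals of $\mathcal{A}$ satisfies $\bigcap_{i\in\Lambda} I_i=\{0\}$.

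Next we build the embedding. Consider the map $\varphi:\mathcal{A}\to\prod_{i\in\Lambda}\mathcal{A}/I_i$, $a\mapsto (a+I_i)_i$. Each quotient $\mathcal{A}/I_i$ is a $G$-graded algebra, because $I_i$ is a graded ideal. The product carries the componentwise grading, which is legitimate since $\mathcal{A}$ is finite, so $\Lambda$ is finite and the product is a finite direct product. The map $\varphi$ is a homomorphism of $G$-graded algebras, equivalently of $\Omega$-algebras in the signature of Section 3. Its kernel is $\bigcap_i I_i=\{0\}$, so $\varphi$ is injective. Each projection of $\varphi(\mathcal{A})$ onto a factor is surjective, so $\mathcal{A}$ is a subdirect product of the $\mathcal{A}/I_i$.

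Now we conclude. Each $\mathcal{A}/I_i$ is a proper graded section of $\mathcal{A}$: take $\mathcal{B}=\mathcal{A}$ and $\mathcal{C}=I_i\neq 0$. Varieties of $G$-graded algebras are closed under direct products and graded subalgebras, by Birkhoff's theorem or directly from the definition through graded identities. Hence $\mathcal{A}\cong\varphi(\mathcal{A})$ lies in the variety generated by its proper graded sections. This contradicts criticality, so $\mathcal{A}$ is subdirectly irreducible, and in particular monolithic.

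The only point needing care is that $\Lambda$ is nonempty. If $\mathcal{A}$ had no nonzero graded ideals, then either $\mathcal{A}=\{0\}$, which is not critical since it lies in every variety, or $\mathcal{A}$ itself is the unique nonzero ideal, in which case the intersection is $\mathcal{A}\neq 0$. Either way this case causes no trouble. With that settled, the argument is a routine translation of the classical universal-algebra fact, and I expect no real obstacle.
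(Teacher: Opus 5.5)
Your proof is correct and is the paper's argument: the paper justifies the proposition only by the remark just before it, namely that a trivial intersection of nonzero graded ideals $I_i$ makes $\mathcal{A}$ a subdirect product of the proper sections $\mathcal{A}/I_i$, which contradicts criticality. You supply the routine details the paper omits: the graded embedding into a finite direct product, closure of varieties under products and graded subalgebras, and the degenerate case $\mathcal{A}=\{0\}$.
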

As consequence of \cite[Theorem 8]{BDMY} and Theorem \ref{WAgraded}, we have the following 
\begin{proposition}\label{criticalsimple}
Let $\mathcal{A}$ be a finite $G$-graded algebra such that $J_G(A)=\{ 0\}$. Then $\mathcal{A}$ is critical, as $G$-graded algebra, if and only if, $\mathcal{A}$ is $G$-graded simple.
\end{proposition}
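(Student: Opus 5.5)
By Theorem \ref{WAgraded}, write $\mathcal{A}=\mathcal{A}_1\oplus\cdots\oplus\mathcal{A}_t$ as a direct sum of finite $G$-simple algebras. Each $\mathcal{A}_i$ is a $G$-graded ideal with $\mathcal{A}_i\mathcal{A}_j=0$ for $i\neq j$. We prove the two implications separately.

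\emph{($\Leftarrow$).} Assume $\mathcal{A}$ is $G$-graded simple, i.e.\ $t=1$. Then $\mathcal{A}$ is graded simple, hence unital and monolithic. We must show that $\mathcal{A}$ is not in the variety generated by its proper graded sections. This is the content of \cite[Theorem 8]{BDMY}, which we would invoke directly. The idea is as follows. A proper graded section $\mathcal{B}/\mathcal{C}$ of $\mathcal{A}$ either has $\mathcal{C}\neq 0$ with $\mathcal{B}=\mathcal{A}$, which forces $\mathcal{B}/\mathcal{C}=0$ by simplicity, or it comes from a proper graded subalgebra $\mathcal{B}$. Using the description $\mathcal{A}\cong M_n(\F)\otimes\mathcal{D}$ with the induced grading of Remark \ref{inducedgrading}, one then produces a graded identity that holds in every proper graded subalgebra but fails in $\mathcal{A}$. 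Since the family of proper sections is finite, the variety they generate is $\var$ of their direct product, so such an identity suffices.

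We expect this construction to be the main obstacle: finding a graded polynomial, e.g.\ a graded Capelli-type or staircase polynomial adapted to the homogeneous basis $E_{ij}\otimes d$, that vanishes on all maximal graded subalgebras. We would rely on \cite{BDMY} for it rather than reprove it.

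\emph{($\Rightarrow$).} Assume $\mathcal{A}$ is critical. Then it is monolithic, so the intersection of its nonzero graded ideals is nonzero. If $t\ge 2$, the ideals $\mathcal{A}_1$ and $\mathcal{A}_2\oplus\cdots\oplus\mathcal{A}_t$ are nonzero graded ideals with zero intersection, which is a contradiction. Alternatively, $\mathcal{A}$ is the direct product of its proper sections $\mathcal{A}/\mathcal{A}_j'$, where $\mathcal{A}_j'=\bigoplus_{i\neq j}\mathcal{A}_i$, so $\mathcal{A}$ lies in the variety they generate, again contradicting criticality. Hence $t=1$ and $\mathcal{A}$ is $G$-simple. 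The case $t=0$, i.e.\ $\mathcal{A}=0$, is excluded because $0$ lies in every variety and so is not critical.
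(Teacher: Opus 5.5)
Your proposal is correct and takes the paper's route. The paper proves the forward direction by combining the decomposition of Theorem \ref{WAgraded} with subdirect irreducibility (monolithicity) of critical algebras, which forces a single $G$-simple summand, and it delegates the converse to \cite[Theorem 8]{BDMY}, exactly as you do.

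The converse also needs no identity construction if you want to avoid the citation. Write $\mathcal{A}\cong S/M$ with $S\subseteq\prod_{j=1}^m\mathcal{C}_j$ a subalgebra of a finite product of proper sections, and let $K_j$ be the projection kernels. If $K_j\not\subseteq M$ for every $j$, graded maximality of $M$ gives $S^m\subseteq K_1\cdots K_m+M=M$, contradicting $\mathcal{A}^m=\mathcal{A}\neq 0$. Hence $\mathcal{A}$ is a section of some $\mathcal{C}_j$, which is impossible because $\mathcal{C}_j$ has fewer elements than $\mathcal{A}$.
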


\section{Main Result}
In this section, we will prove the main theorem of this work, that is, if $G$ is a finite group, then every finite $G$-graded associative algebra $\mathcal{A}$ has finite basis of $G$-graded identities. By Remark \ref{identitiescross}, it is sufficient to find a finite subset of $G$-graded identities of $\mathcal{A}$ such that their defined variety is locally finite and contains only a finite number of $G$-graded critical algebras. 

\begin{remark}\label{gradedgenerators}
Let $\mathcal{A}$ be a $G$-graded algebra, suppose that $\mathcal{A}$ is finitely generated, as an algebra. Then there exits a finite generator set $S$, suppose that $\mid S\mid=d$. Then, the set $S'$ of the homogeneous components of the elements of $S$ also generates $\mathcal{A}$. Note that $S'$ contains at most $d$ elments of degree $g$, for each $g\in G$. Thus, $\mid S'\mid\leq \mid G\mid d$.  
%In this case, we say that $\mathcal{A}$ is $d$-generated as a $G$-graded algebra.
\end{remark}

We start with a criterion for the local finiteness of a variety of $G$-graded algebras.
%{\color{blue}
\begin{proposition}\label{criteriolocallyfinite}
Let $G$ be a finite group and let $\V$ be a variety of $G$-graded algebras. Then, the following conditions are equivalent:
\begin{itemize}
    \item[(i)] $\V$  is locally finite;
    %\item[(ii)] all the $1$-generated algebras of $\V$ are finite;
    %\item[ii')] The relatively free algebra $F_1(\V)$  of rank $1$ is finite.
    \item[(ii)]  the following identity holds in $\V$:
    \begin{equation}\label{4.1(ii)}
    (x_1^{(e)})^k=(x_1^{(e)})^l,\,\,\, k>l>0 \mbox{ integral };
    \end{equation}
%\item[iii')]
%There exists a positive integer $k$ such that, for any $(g_1,\ldots, g_k)\in G^k$,  the variety $\V$ satisfy the following $G$-graded identity:
%    \[ x_1^{(g_1)}\cdots x_k^{(g_k)}= f(x_1^{(g_1)},\ldots, x_k^{(g_k)}), \]
%where $f(x_1^{(g_1)},\ldots, x_k^{(g_k)})$ is a $G$-graded polynomial with ordinary degree at least $k+1$
\item[(iii)] the following identity holds in $\V$:
\begin{equation}\label{4.1(iii)}
\sum_{i=s}^r c_i (x_1^{(e)})^i,  
\end{equation}
where $r\geq s>0$ and $c_r\neq 0$.
\end{itemize}
\end{proposition}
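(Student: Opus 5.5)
The plan is to prove the cycle (i)$\Rightarrow$(ii)$\Rightarrow$(iii)$\Rightarrow$(i). Throughout, $\F$ is the finite field fixed in the paper.

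\emph{(i)$\Rightarrow$(ii).} Consider the relatively free algebra $F_1(\V)$ and inside it the graded subalgebra generated by the single variable $x_1^{(e)}$. This subalgebra is finitely generated, so by (i) it is finite. Hence the powers $(x_1^{(e)})^n$, $n\ge 1$, cannot all be distinct. So there are $k>l>0$ with $(x_1^{(e)})^k=(x_1^{(e)})^l$ in $F_1(\V)$. This is exactly the statement that (\ref{4.1(ii)}) is an identity of $\V$.

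\emph{(ii)$\Rightarrow$(iii).} This is immediate: take $r=k$, $s=l$, $c_k=1$, $c_l=-1$ and all other $c_i=0$.

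\emph{(iii)$\Rightarrow$(i).} This is the substantial direction. Let $\cA\in\V$ be generated by a finite set; by Remark \ref{gradedgenerators} we may take it to be a finite set $S$ of homogeneous elements. We must show $\cA$ is finite, and since $\F$ is finite it suffices to show $\dim\cA<\infty$.

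\begin{enumerate}
\item[Step 1.] Make the identity uniform on $\cA_e$. Since $c_r\neq 0$, identity (\ref{4.1(iii)}) gives $a^r\in\operatorname{span}\{a^s,\dots,a^{r-1}\}$ for every $a\in\cA_e$. So the subalgebra generated by $a$ is spanned by $a,\dots,a^{r-1}$ and has at most $|\F|^{r-1}$ elements. Hence the powers of $a$ repeat within $N=|\F|^{r-1}+1$ steps. Replacing exponents by suitable common multiples, I expect to obtain one pair $k>l>0$ with $a^k=a^l$ for all $a\in\cA_e$. In particular $\cA_e$ satisfies a nontrivial ordinary polynomial identity.

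\item[Step 2.] Pass the identity from $\cA_e$ to $\cA$. Because $G$ is finite, I will invoke the theorem of Bergman--Isaacs / Bahturin--Giambruno--Riley: if $\cA_e$ is PI then $\cA$ is PI. Thus $\cA$ is a finitely generated PI algebra.

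\item[Step 3.] Show every word in $S$ is algebraic of bounded degree. Let $w$ be a word in $S$. It is homogeneous, say of degree $g$. If $m$ is the order of $g$, then $w^m\in\cA_e$, so $w^m$ satisfies Step 1's relation and $w$ is algebraic of degree at most $m\cdot k\le |G|k$. This bound is independent of $w$.

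\item[Step 4.] Apply Shirshov's height theorem. A finitely generated PI algebra in which all words in the generators up to a fixed length are algebraic of bounded degree is finite-dimensional. So $\cA$ is finite-dimensional, hence finite, which proves (i).
\end{enumerate}

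The main obstacle is Step 2, transferring the PI property from $\cA_e$ to $\cA$. If I want to avoid the external theorem, the first thing I would try is to prove finiteness of $\cA_e$ directly. First, $\cA_e$ is generated by the words in $S$ of degree $e$ that have no proper prefix of degree $e$. By pigeonhole on the degrees of prefixes, such words have length at most $|G|$, so there are finitely many. Then $\cA_e$ is a finitely generated algebra satisfying $x^k=x^l$, hence finite by the classical argument. It would remain to show that each $\cA_g$ is a finitely generated $\cA_e$-module; this is where I expect the technical difficulty of rewriting long words to lie. Because of that difficulty, I would rely on the known PI transfer for finite $G$.
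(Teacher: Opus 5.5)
Your argument is correct and follows essentially the same route as the paper: the identity in (iii) makes the identity component PI, and the Bahturin--Giambruno--Riley theorem lifts this to the whole algebra. Taking suitable powers of homogeneous words then bounds their algebraic degree uniformly, and Shirshov's height theorem yields finite dimension.

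The only differences are minor. You argue for an arbitrary finitely generated $\cA\in\V$ rather than for $F_d(\V)$, which is equivalent. Your Step~1 is superfluous, since applying the polynomial of (iii) directly to $w^{|G|}$ already shows that $w$ is algebraic of degree at most $|G|r$.

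In your abandoned aside, the claim that degree-$e$ words with no proper degree-$e$ prefix have length at most $|G|$ is false when $S$ contains elements of degree $e$. For example, take $ab^na$ with $\deg a$ of order $2$ and $\deg b=e$: it has degree $e$, no proper prefix of degree $e$, and unbounded length. So relying on the PI transfer was the right call.
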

\begin{proof}
Assume that $\V$  is locally finite. Then $F_1(\V)=\F\langle X_{G,1}\rangle/ (\F\langle X_{G,1}\rangle\cap T_G(\V))$ is a finite algebra, therefore there exist  $k>l>0$ such that the equality (\ref{4.1(ii)}) holds. Hence (i) $\Rightarrow$ (ii). It is clear that (ii) $\Rightarrow$ (iii). 

Now assume that (iii) holds. If $F_d(\V)=\F\langle X_{G,d}\rangle/(\F\langle X_{G,d}\rangle\cap T_G(\V)) $, then (\ref{4.1(iii)}) implies that $(F_d(\V))_e$ is a PI-algebra. Since $G$ is a finite group, the main result of \cite{BGR} implies that $F_d(\V)$ is a PI algebra. Then Shirshov’s Height Theorem (see \cite{Shirshov}) implies that there exists $m, l$ such that $F_d(\V)$ is spanned as a vector space by the products 
\begin{equation}\label{gen}
u_{i_1}^{k_1}\cdots u_{i_t}^{k_t},
\end{equation}
where $t\leq m$ and $u_{i_1},\dots, u_{i_t}$ lie in the set $\mathcal{U}$ of monomials in $X_{G,d}$ of degree $\leq l$. Note that the $|\,G\,|$th power of any homogeneous element lies in $(F_d(\V))_e$, hence (\ref{4.1(iii)}) implies that every homogeneous element in $F_d(\V)$, in particular each monomial in $\mathcal{U}$, is algebraic of degree  at most $\mid G\mid r$.
%since this set is finite there exists a natural number $\kappa$ such that the monomials of $\mathcal{U}$ are algebraic of degree $\leq \kappa$. 
Hence $F_d(\V)$ is generated, as a vector space, by the products in (\ref{gen}), such that $k_i\leq \mid G\mid r$ for all $i=1,\dots, t$. This set is finite, hence $F_d(\V)$ has finite dimension and therefore is finite. This implies that $\V$  is a locally finite variety.
\end{proof}
\begin{remark}\label{conditioncross}
Since the group $G$ is finite, one can use \cite[Lemma 2.3]{Lvov} to observe the following. If a finite $G$-graded algebra $\mathcal{A}$ has a finite set of $G$-graded identities such that every $G$-graded critical algebra satisfying these identities is $s$-generated, where $s$ is a fixed natural number, then $\var(A)$ is Cross.
\end{remark}

The following proposition provides a criterion for a $G$-graded algebra to be noncritical (as $G$-graded algebra). The criterion was originally obtained for rings by Lvov in \cite{Lvov}. We emphasize that it remains valid for $G$-graded algebras, since the notion of the free sum had been extended to $\Omega$-algebras in the paper of Kurosh \cite{AGK1}. We state the proposition in a form similar to that presented in \cite{Medvedev} by Medvedev.
%important if we want to bound the number of homogeneous generators of a $G$-graded critical algebra. 
%We use the following notation.
%\begin{notation}
%Let $A_1,\ldots, A_{n}$ be subsets of the algebra $\mathcal{A}$. Then, we denote by $\langle A_1,\ldots, A_n\rangle$, the vector space generated by all the products of the form $a_1\cdots a_n$, for $a_i\in A_i$ and also by all the products obtained by all permutations of the elements $a_i$.
%\end{notation}
\begin{proposition}\label{criticalconditions}
Let $L, M_1,\ldots, M_n$ be subsets of homogeneous elements of $\mathcal{A}$ that satisfy the followings conditions:
\begin{itemize}
\item[(i)] The set $L\cup M_1\cup\cdots \cup M_n$ generates $\mathcal{A}$, as a $G$-graded algebra;
    \item[(ii)] For any $i\in\{1,\ldots,n\}$,  $\mathcal{A}$ is not generated by $L\cup \bigcup_{j\neq i} M_j$, as a $G$-graded algebra;
    \item[(iii)] Any word $v(a_1,\ldots, a_r)$ comprised of an arbitrary number of elements from $L\cup M_1\cup\cdots \cup M_n$ is equal to zero if it contains at least one element from each set $M_i$, for $1\leq i\leq n.$
    %$\langle M_1,\ldots M_n, \underbrace{\mathcal{A}, \ldots, \mathcal{A}}_{k\, \textrm{times}}\rangle=0$, for any $k\geq 0$.
\end{itemize}
Then $\mathcal{A}$ belongs to a variety generated by proper $G$-graded subalgebras. In particular, this algebra is not critical as $G$-graded algebra.
\end{proposition}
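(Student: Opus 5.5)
The natural route is L'vov's free-sum argument, carried out in the category of $G$-graded algebras. For each $i\in\{1,\ldots,n\}$, let $\mathcal{A}_i$ be the $G$-graded subalgebra of $\mathcal{A}$ generated by $L\cup\bigcup_{j\neq i}M_j$. By (ii), each $\mathcal{A}_i$ is a proper graded subalgebra. I will show that $\mathcal{A}$ is a graded homomorphic image of a graded subalgebra of $\mathcal{B}=\mathcal{A}_1\oplus\cdots\oplus\mathcal{A}_n$, or of a suitable relatively free object built from the $\mathcal{A}_i$. This gives $\mathcal{A}\in\var(\mathcal{A}_1,\ldots,\mathcal{A}_n)$, so $\mathcal{A}$ lies in the variety generated by its proper graded sections and is not critical.

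\textbf{Step 1: setting up the test.} It suffices to show that every $G$-graded polynomial $f(x_1^{(g_1)},\ldots,x_k^{(g_k)})$ vanishing on all $\mathcal{A}_i$ also vanishes on $\mathcal{A}$. By multilinearity of evaluation in the additive sense, and because $\mathcal{A}$ is spanned by words in the generators, I cannot simply substitute generators into $f$, since $f$ is not linear. So I will work structurally.

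\textbf{Step 2: the graded algebra $\mathcal{C}$.} Let $\mathcal{F}$ be the free $G$-graded algebra on homogeneous symbols $\bar a$, one for each $a\in L\cup M_1\cup\cdots\cup M_n$, with degrees preserved. Let $\phi:\mathcal{F}\to\mathcal{A}$ be the natural graded epimorphism. Let $I\subseteq\mathcal{F}$ be the graded ideal spanned by the words containing at least one symbol from each $M_i$. By (iii), $I\subseteq\ker\phi$. Set $\mathcal{C}=\mathcal{F}/(I+T_G(\mathcal{A}_1,\ldots,\mathcal{A}_n)(\mathcal{F}))$.

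The key claim is that $\phi$ factors through $\mathcal{C}$. Granting this, $\mathcal{A}$ is an image of $\mathcal{C}$, and it remains to see $\mathcal{C}\in\var(\mathcal{A}_1,\ldots,\mathcal{A}_n)$.

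\textbf{Step 3: embedding $\mathcal{C}$ via the maps $\psi_i$.} For each $i$, define a graded homomorphism $\psi_i:\mathcal{F}\to\mathcal{A}_i$ by sending $\bar a\mapsto a$ for $a\notin M_i$ and $\bar a\mapsto0$ for $a\in M_i$. This is graded, since $0$ is homogeneous of every degree. Every word containing all the $M_j$ contains some element of $M_i$, so $I\subseteq\ker\psi_i$ for each $i$.

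Consider $\Psi=(\psi_1,\ldots,\psi_n):\mathcal{F}\to\prod_i\mathcal{A}_i$. Its image lies in $\var(\mathcal{A}_1,\ldots,\mathcal{A}_n)$. So the proof reduces to the \textbf{main obstacle}: showing $\ker\Psi\subseteq\ker\phi$, i.e.\ that $\phi(w)=0$ whenever $\psi_i(w)=0$ for all $i$.

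\textbf{Step 4: the inclusion–exclusion trick.} To handle the obstacle, write $w\in\mathcal{F}$ modulo $I$ as $w=\sum_{S\subsetneq\{1,\ldots,n\}}w_S$. Here $w_S$ collects the words whose $M$-letters come exactly from the sets $M_j$ with $j\in S$ (the words with all $M_j$ present lie in $I$). For $T\subsetneq\{1,\ldots,n\}$, let $\epsilon_T$ be the substitution killing the $M_j$ with $j\notin T$. Then $\epsilon_T(w)=\sum_{S\subseteq T}w_S$, and by Möbius inversion
\[
w\equiv\sum_{T}\mu_T\,\epsilon_T(w)\pmod I
\]
for integer coefficients $\mu_T$. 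Each $\epsilon_T$ with $T\neq\{1,\ldots,n\}$ factors through some $\psi_i$ followed by a graded endomorphism of $\mathcal{A}_i$. Hence if $\psi_i(w)=0$ for all $i$, then $\phi(\epsilon_T(w))=0$, because evaluating in $\mathcal{A}$ a word without $M_i$-letters equals its evaluation in $\mathcal{A}_i$. It follows that $\phi(w)=0$.

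The same argument applied to $f(w_1,\ldots,w_k)$, for $f\in T_G(\mathcal{A}_1,\ldots,\mathcal{A}_n)$, shows directly that $f$ vanishes on $\mathcal{A}$. I expect the careful bookkeeping of this inversion, together with checking that the $\epsilon_T$ respect the grading, to be the hardest part; this is exactly where the graded free-sum machinery of Kurosh \cite{AGK1} is used.
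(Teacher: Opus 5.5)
\textbf{The gap is in Steps 3--4.} Step 3 reduces everything to $\ker\Psi\subseteq\ker\phi$, and this claim is false in general. Your Step 2 formulation (that $\phi$ kills $I$ plus the verbal ideal of $T_G(\mathcal{A}_1,\ldots,\mathcal{A}_n)$ in $\mathcal{F}$) was the right target. Step 3 replaces it by a strictly stronger statement.

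The justification in Step 4 breaks because a graded endomorphism of $\mathcal{A}_i$ that fixes $L$ and some of the $M_j$ while killing the others need not exist: the generators of $\mathcal{A}_i$ satisfy relations. Knowing $\psi_i(w)=0$ for all $i$ only gives $\phi(\epsilon_T(w))=0$ for the maximal sets $T=\{1,\ldots,n\}\setminus\{i\}$. Your M\"obius formula also needs the smaller $T$.

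Concrete counterexample (trivial grading): let $\mathcal{A}$ have basis $l,m_1,m_2,c$ with $l^2=m_1^2=m_2^2=c$ and all other products of basis elements zero. Take $L=\{l\}$, $M_1=\{m_1\}$, $M_2=\{m_2\}$.
\begin{itemize}
\item Conditions (i)--(iii) hold.
\item $w=\bar l^{\,2}-\bar m_1^{\,2}-\bar m_2^{\,2}$ satisfies $\psi_1(w)=\psi_2(w)=0$ but $\phi(w)=-c\neq 0$.
\item The would-be endomorphism of $\mathcal{A}_1$ with $l\mapsto l$, $m_2\mapsto 0$ would have to send $c=l^2=m_2^2$ both to $c$ and to $0$.
\end{itemize}
So $\mathcal{A}$ is not an image of $\Psi(\mathcal{F})$ via $\bar a\mapsto a$. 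Your last sentence (``the same argument'' handles $f(w_1,\ldots,w_k)$) inherits the error as written, since it routes through $f(w_1,\ldots,w_k)\in\ker\Psi$.

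\textbf{The inclusion--exclusion idea is sound, and either of two repairs works.}

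(a) Run the inversion only on $w=f(w_1,\ldots,w_k)$, with $f\in T_G(\mathcal{A}_1,\ldots,\mathcal{A}_n)$ and $w_j\in\mathcal{F}_{g_j}$.
\begin{itemize}
\item Since $\epsilon_T$ is a graded endomorphism of the free algebra $\mathcal{F}$, $\epsilon_T(w)=f(\epsilon_T(w_1),\ldots,\epsilon_T(w_k))$.
\item For any $i\notin T$, the elements $\phi(\epsilon_T(w_j))$ are homogeneous of degree $g_j$ in $\mathcal{A}_i$.
\item Hence $\phi(\epsilon_T(w))=0$ for every $T\subsetneq\{1,\ldots,n\}$.
\item With $\phi(I)=0$ and your inversion formula, this gives $\phi(w)=0$.
\item Graded surjectivity of $\phi$ then yields $T_G(\mathcal{A}_1,\ldots,\mathcal{A}_n)\subseteq T_G(\mathcal{A})$.
\end{itemize}

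(b) Keep the structural route but use all $2^n-1$ coordinates.
\begin{itemize}
\item Map $\mathcal{F}\to\prod_{T\subsetneq\{1,\ldots,n\}}\mathcal{A}^{(T)}$ by $w\mapsto(\phi(\epsilon_T(w)))_T$.
\item Here $\mathcal{A}^{(T)}$ is the graded subalgebra generated by $L\cup\bigcup_{j\in T}M_j$, which lies in a proper $\mathcal{A}_i$.
\item Your M\"obius computation shows precisely that this kernel lies in $\ker\phi$.
\item So $\mathcal{A}$ is a graded image of a graded subalgebra of a finite product of proper graded subalgebras.
\end{itemize}

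Either repaired version is an elementary, self-contained proof. The paper instead defers to L'vov's Lemma 2.25 and Kurosh's free sums of $\Omega$-algebras. Once the inversion is carried out inside the free algebra, where the substitutions $\epsilon_T$ are genuine endomorphisms, no free-sum machinery is needed.
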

In Proposition \ref{criticalconditions}, we would like to emphasize that the term “word” refers to a product of arbitrary length of elements in $L\cup M_1\cup\cdots \cup M_n$. The proof of this proposition is completely analogous to the proof of \cite[Lemma 2.25]{Lvov}.
The previous proposition is a useful tool, if we want to bound the number of homogeneous generators of a $G$-graded critical algebra.

\begin{corollary}\label{nilpcritical}
A $G$-graded critical nilpotent algebra $\mathcal{A}$ of index $s$ is $(s-1)$-generated.
\end{corollary}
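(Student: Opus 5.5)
We argue by contradiction using Proposition \ref{criticalconditions}, following \cite[Lemma 2.25]{Lvov}. Let $\mathcal{A}$ be a critical $G$-graded algebra with $\mathcal{A}^s=\{0\}$. Since $\mathcal{A}$ is finite, it is finitely generated as a $G$-graded algebra. By Remark \ref{gradedgenerators} we may take a finite generating set of homogeneous elements, and we may take it \emph{irredundant}: no proper subset generates $\mathcal{A}$ as a $G$-graded algebra. Write this set as $\{a_1,\ldots,a_n\}$. We will show that $n\leq s-1$.

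Suppose instead that $n\geq s$. Put $L=\varnothing$ and $M_i=\{a_i\}$ for $1\leq i\leq n$, and check the three hypotheses of Proposition \ref{criticalconditions}.
\begin{itemize}
\item[(i)] The sets $M_i$ together generate $\mathcal{A}$ by construction.
\item[(ii)] This is exactly the irredundancy of the generating set: for each $i$, $\mathcal{A}$ is not generated by $\bigcup_{j\neq i}M_j$.
\item[(iii)] Any word in $a_1,\ldots,a_n$ that contains every $a_i$ at least once has length at least $n\geq s$. It therefore lies in $\mathcal{A}^s=\{0\}$.
\end{itemize}
Proposition \ref{criticalconditions} then says that $\mathcal{A}$ lies in the variety generated by its proper $G$-graded subalgebras, so $\mathcal{A}$ is not critical. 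This contradiction gives $n\leq s-1$, so $\mathcal{A}$ is $(s-1)$-generated.

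The main point to check is that graded generation by homogeneous elements behaves correctly. A $G$-graded subalgebra generated by homogeneous elements is the span of the words in those elements, so "the span of words" and "generation as a $G$-graded algebra" coincide here, and condition (iii) concerns exactly these words. Irredundancy is then obtained simply by discarding redundant generators from a finite homogeneous generating set.

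One convention also needs to be fixed. If "index $s$" means $\mathcal{A}^{s}=0$ with $\mathcal{A}^{s-1}\neq 0$, the argument gives the bound $s-1$ as stated. If instead it means the $s$-th power is the last nonzero one, the same argument gives $n\le s$, and the statement is adjusted accordingly. The degenerate case $\mathcal{A}^2=0$ fits the same scheme: then $n\le 1$.
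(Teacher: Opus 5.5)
Your proof is correct and is essentially the paper's argument: take an irredundant homogeneous generating set $\{a_1,\ldots,a_n\}$, assume $n\ge s$, and apply Proposition \ref{criticalconditions} with $L$ trivial and $M_i=\{a_i\}$. Condition (iii) holds because every word involving all the $a_i$ lies in $\mathcal{A}^n\subseteq\mathcal{A}^s=\{0\}$, and your reading of ``index $s$'' as $\mathcal{A}^s=\{0\}$ matches the paper's usage (cf.\ Proposition \ref{maximalnumberofgenerators}).
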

\begin{proof}
Let $m_1,\ldots, m_k$ be a minimal system of homogeneous generators of $\mathcal{A}$. It is sufficient to prove that $k<s$. Supose that $k\geq s$ and let $L=\{ 0\}$ and $M_i=\{m_i\}$, for $1\leq i\leq k$. Notice that the conditions (i)-(ii) of Proposition \ref{criticalconditions} are satisfied by the choice of the set  $\{m_1,\ldots, m_k\}$, while the condition (iii) holds because $\mathcal{A}^k=\{ 0\}$ due to $k\geq s$. Then, by Proposition \ref{criticalconditions}, $\mathcal{A}$ is not critical. But this is an contradiction, hence the result follows.
\end{proof}
\begin{corollary}\label{nilpcritical2}
Any variety $\V$ of $G$-graded  nilpotent algebras
contains only a finite number of critical $G$-graded algebras.
\end{corollary}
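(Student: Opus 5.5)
The plan is to combine Corollary \ref{nilpcritical} with a counting argument, in the spirit of Proposition \ref{locallyfinite2}(iv). Let $\V$ be a variety of $G$-graded nilpotent algebras. As usual for such statements, this means that $\V$ satisfies an identity of nilpotency. So there is some $s$ such that every algebra in $\V$ satisfies $\mathcal{A}^s=\{0\}$. Concretely, all graded monomials $x_1^{(g_1)}\cdots x_s^{(g_s)}$, for $(g_1,\ldots,g_s)\in G^s$, are identities of $\V$.

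The first step is to bound the number of generators. Every critical algebra $\mathcal{A}\in\V$ is nilpotent of some index $s'\le s$. By Corollary \ref{nilpcritical}, $\mathcal{A}$ is generated, as a $G$-graded algebra, by at most $s-1$ homogeneous elements. Each such generator has a degree in $G$, so we may choose at most $s-1$ variables of each degree. Consequently $\mathcal{A}$ is a homomorphic image of the relatively free algebra $F_{s-1}(\V)$.

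The second step is to show that $F_{s-1}(\V)$ is finite. It is spanned by the images of the monomials of length less than $s$ in the finite set $X_{G,s-1}$, because $G$ is finite and all longer products vanish. Hence $F_{s-1}(\V)$ is finite-dimensional over the finite field $\F$, and therefore it is a finite set. A finite algebra has only finitely many graded ideals, so $F_{s-1}(\V)$ has only finitely many quotients up to isomorphism. Every critical algebra of $\V$ is among these quotients, so there are only finitely many critical algebras. Alternatively, the same conclusion follows from Proposition \ref{locallyfinite2}(iv), since all $(s-1)$-generated algebras in $\V$ are finite.

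The main obstacle I expect is interpretive rather than technical. If $\V$ were only assumed to consist of nilpotent algebras, with no uniform nilpotency index, the argument would need such an index. I will read the statement as referring to a variety satisfying $x_1\cdots x_s=0$, in the graded form above. Under that reading, the only nontrivial input is Corollary \ref{nilpcritical}. Finiteness of $G$ and of $\F$ is what makes $F_{s-1}(\V)$ a finite algebra.
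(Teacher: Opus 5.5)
Your proof is correct and follows the paper's argument: Corollary~\ref{nilpcritical} bounds the number of homogeneous generators of a critical algebra by $s-1$, and every such algebra is then a quotient of the finite algebra $F_{s-1}(\V)$. You also supply the justification that $F_{s-1}(\V)$ is finite, which the paper leaves implicit. Your interpretive worry is unnecessary, though: the paper takes $s$ to be the nilpotency index of the relatively free algebra of countable rank, which belongs to $\V$ and is therefore nilpotent, so a uniform identity $x_1^{(g_1)}\cdots x_s^{(g_s)}=0$ holds automatically whenever every algebra of $\V$ is nilpotent.
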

\begin{proof}
Let $s$ be the nilpotency index of the free algebra of countable rank in $\V$. Then $s$ bounds the nilpotency indices of all algebras in $\V$. By Corollary \ref{nilpcritical}, we have that any nilpotent $G$-graded critical algebra in $\V$ is $(s-1)$-generated. Now, consider the relatively free algebra $F_{s-1}(\V)$ in $\V$. Since this algebra is finite and  any  $(s-1)$-generated $G$-graded subalgebra is a homomorphic images of $F_{s-1}(\V)$, there only a finite number of $(s-1)$-generated $G$-graded algebras in $\V$. In particular, this variety contains only a finite number of $G$-graded critical algebras.
\end{proof}

\begin{proposition}\label{maximalnumberofgenerators}
Let $\mathcal{A}$ be a $G$-graded critical algebra and let $I$ be a nilpotent $G$-graded ideal with nilpotent index $s$. If $\mathcal{A}/I$ is generated by $d$ homogeneous elements, then $\mathcal{A}$ is generated by no more than $(s+d-1)$ homogeneous elements.
%$d+s$-elements
\end{proposition}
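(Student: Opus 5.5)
We argue by contradiction using Proposition \ref{criticalconditions}, in the same way as Corollary \ref{nilpcritical}. Choose homogeneous elements $a_1,\ldots,a_d\in\mathcal{A}$ whose images generate $\mathcal{A}/I$, and put $L=\{a_1,\ldots,a_d\}$. Since $L\cup I$ generates $\mathcal{A}$ and $I$ is a graded ideal, we can adjoin homogeneous elements of $I$ until $\mathcal{A}$ is generated. Fix a family $m_1,\ldots,m_k$ of homogeneous elements of $I$ such that $L\cup\{m_1,\ldots,m_k\}$ generates $\mathcal{A}$ and $k$ is minimal. Such a finite family exists because $\mathcal{A}$ is finite and every element of $I$ is a sum of its homogeneous components, which lie in $I$. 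It then suffices to show $k\le s-1$, because $\mathcal{A}$ is then generated by at most $d+s-1$ homogeneous elements.

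Suppose $k\ge s$ and set $M_i=\{m_i\}$ for $1\le i\le k$. We check the three hypotheses of Proposition \ref{criticalconditions}.
\begin{itemize}
\item[(i)] This holds by the choice of the family.
\item[(ii)] This holds by the minimality of $k$: if $L\cup\{m_j\mid j\neq i\}$ generated $\mathcal{A}$, we would have a smaller family.
\item[(iii)] Let $v$ be a word in elements of $L\cup M_1\cup\cdots\cup M_k$ containing each $m_i$ at least once. Then $v$ is a product in $\mathcal{A}$ with at least $k\ge s$ factors from $I$, possibly separated by elements of $L$. Group $v$ as $u_0 m_{j_1} u_1 m_{j_2}\cdots m_{j_t} u_t$ with $t\ge s$, where each $u_i$ is a (possibly empty) product of elements of $L$. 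Since $I$ is a two-sided ideal, each block $m_{j_r}u_r$ lies in $I$, so $v\in u_0 I^t\subseteq I^s=\{0\}$.
\end{itemize}
Proposition \ref{criticalconditions} then shows that $\mathcal{A}$ lies in the variety generated by proper graded subalgebras, so $\mathcal{A}$ is not critical. This contradicts the hypothesis, hence $k\le s-1$.

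The only step needing care is the existence of the minimal family together with the lifting of generators of $\mathcal{A}/I$. The lift of a homogeneous generator of $\mathcal{A}/I$ can be taken homogeneous of the same degree, since $I$ is graded and we may project onto homogeneous components. I expect no real obstacle; the main point is to make sure that condition (iii) is read with the convention of arbitrary words, including words with interleaved elements of $L$. This is handled by the ideal property of $I$ as above.
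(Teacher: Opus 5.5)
Your proof is correct and follows the paper's argument: you lift homogeneous generators of $\mathcal{A}/I$ to a set $L$, complete $L$ to a generating set with a minimal family of homogeneous elements of $I$, and apply Proposition~\ref{criticalconditions} with singleton sets $M_i$, using $I^s=\{0\}$ to obtain condition (iii). Your notion of minimality (minimal among families in $I$ that together with $L$ generate $\mathcal{A}$) is the precise version of the paper's ``minimal system of homogeneous generators for the ideal $I$'', and it is exactly what condition (ii) needs.
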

\begin{proof}
Let $\overline{a}_1,\ldots,\overline{a}_d$ be homogeneous generators of $\mathcal{A}/I$. For each $i$, choose a homogeneous preimage $a_i \in \mathcal{A}$ of $\overline{a}_i$, and set $L=\{a_1,\ldots,a_d\}$. Next, choose a minimal system of homogeneous generators $b_1,\ldots,b_p$ for the ideal $I$. Let us show that $p<s$. Indeed,  assume $p\geq s$ and let $M_i=\{b_i\}$ for $1\leq i\leq p$. \\
\indent
Thus, for any word $v(a_1,\ldots, a_r)$,  comprised of an arbitrary number of elements from $L\cup M_1\cup\cdots \cup M_p$, belongs to $I^p$, if it contains at least one element from each set $M_i$, for $1\leq i\leq p$. Notice that $I^p\subseteq I^s=0$.  
%Thus, for any $k\geq 0$, we have
%\[\langle M_1,\ldots M_p, \underbrace{A,\ldots, A}_{k\,\textrm{times}}\rangle\subseteq I^p\subseteq I^s=\{ 0\}.\]
Hence, by Proposition \ref{criticalconditions}, $\mathcal{A}$ is not critical. This is a contradiction. Hence, $p\leq s-1$ and the result follows.
\end{proof}
As a direct consequence of Proposition \ref{criticalconditions}, we have the following result:
\begin{corollary}\label{corcritical}
Let $\mathcal{A}$ be a $G$-graded critical algebra and  $J$  a $G$-graded ideal of $\mathcal{A}$. Suppose that $\mathcal{A}/J$ is the direct sum of its non-zero  $G$-graded ideals $\overline{M_1},\ldots,  \overline{M_n}$. Denote by $M_1,\ldots, M_n$ the pre-imagens of these ideals in the algebra $\mathcal{A}$ under the natural projection of $\mathcal{A}$ onto the factor algebra $\mathcal{A}/J$. Let $S_n$ be the symmetric group of degree $n$. Then, there exits $\sigma\in S_n$ such that
\[M_{\sigma(1)}M_{\sigma(2)}\cdots M_{\sigma(n)}\neq 0.\]

%Then, there is a non-zero word  $v(a_1,\ldots, a_r)$  comprised elements from $M_1\cup\cdots \cup M_n$, where at least one element from each set $M_i$, for $1\leq i\leq n$ appears.
%\[\langle M_1,\ldots, M_n\rangle\neq \{ 0\}.\]
\end{corollary}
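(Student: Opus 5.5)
We argue by contradiction, applying Proposition \ref{criticalconditions}. Suppose that $M_{\sigma(1)}\cdots M_{\sigma(n)}=0$ for every $\sigma\in S_n$. We want subsets $L,M_1',\ldots,M_n'$ of homogeneous elements satisfying conditions (i)--(iii). Then $\mathcal{A}$ would lie in the variety generated by its proper $G$-graded subalgebras, contradicting criticality.

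First we choose the sets. Since $J$ and the $M_i$ are $G$-graded, we let $L$ be the set of all homogeneous elements of $J$, and $M_i'$ the set of all homogeneous elements of $M_i$ (or homogeneous elements of $M_i\setminus J$ together with a graded generating set; the choice below works with the full homogeneous parts). Because $\mathcal{A}/J=\overline{M_1}\oplus\cdots\oplus\overline{M_n}$, we have $\mathcal{A}=M_1+\cdots+M_n$ with $J\subseteq M_i$. Hence the homogeneous elements of $L\cup\bigcup M_i'$ span $\mathcal{A}$, which gives (i).

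For (ii), note that the $G$-graded subalgebra generated by $L\cup\bigcup_{j\neq i}M_j'$ is contained in $\sum_{j\neq i}M_j$. This sum is an ideal whose image in $\mathcal{A}/J$ is $\bigoplus_{j\neq i}\overline{M_j}$, which does not contain $\overline{M_i}\neq0$. So that subalgebra is proper.

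The main work is (iii). Take a word $v$ in elements of $L\cup\bigcup M_i'$ containing at least one letter from each $M_i'$. Choose one occurrence from each $M_i'$; these occur in the word in some order $\sigma(1),\ldots,\sigma(n)$. Group the word as
\[v=w_0\,m_{\sigma(1)}w_1\,m_{\sigma(2)}\cdots m_{\sigma(n)}w_n,\]
where each $w_k$ is a possibly empty product of elements of $\mathcal{A}$. Each $M_i$ is a two-sided ideal, being the preimage of an ideal. So we absorb $w_0m_{\sigma(1)}w_1$ into $M_{\sigma(1)}$, then each $m_{\sigma(k)}w_k$ into $M_{\sigma(k)}$, and the final $w_n$ into $M_{\sigma(n)}$. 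This gives $v\in M_{\sigma(1)}M_{\sigma(2)}\cdots M_{\sigma(n)}=0$.

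The one point needing care is the degenerate case $n=1$. Then the hypothesis says $M_1=\mathcal{A}\neq 0$, so the product $M_1$ is nonzero and the claim is trivial. Since (i)--(iii) hold for $n\ge 2$, Proposition \ref{criticalconditions} shows $\mathcal{A}$ is not critical, a contradiction. Therefore some product $M_{\sigma(1)}\cdots M_{\sigma(n)}$ is nonzero.
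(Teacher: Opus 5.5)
Your argument is correct and follows the same route as the paper, which states this corollary as a direct consequence of Proposition \ref{criticalconditions}; you supply exactly the intended verification, using the two-sided ideal property of the $M_i$ to push every relevant word into some $M_{\sigma(1)}\cdots M_{\sigma(n)}$. One small fix is needed: with the ``full homogeneous parts'' every homogeneous element of $J$ lies in $L$ and in all the $M_i'$, so a single letter could count for every $M_i'$ and the distinct positions you use in (iii) are not automatic; taking $M_i'$ to be the homogeneous elements of $M_i$ not in $J$ makes $L,M_1',\ldots,M_n'$ pairwise disjoint (since $M_i\cap M_j=J$ for $i\neq j$) without affecting (i) or (ii), and then your verification of (iii) goes through verbatim.
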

%%%%%%%%%%%%%%%%%%%%%%%%%%%%%%
\begin{proposition}\label{cotasumandosimples}
Let $\mathcal{A}$ be a $G$-graded critical algebra whose Jacobson radical $J_G(\mathcal{A})$ has nilpotent index $s$. Then the factor algebra $\mathcal{A}/J_G(\mathcal{A})$ is the direct sum of no more than $2s-1$ $G$-graded simple algebras.
\end{proposition}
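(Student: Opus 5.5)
We follow Lvov's argument for finite rings. Write $J=J_G(\mathcal{A})$. By Theorem \ref{WAgraded},
\[
\mathcal{A}/J=\overline{S_1}\oplus\cdots\oplus\overline{S_n},
\]
where the $\overline{S_i}$ are $G$-simple; by the corollary preceding Theorem \ref{WAgraded} each is unital. Assume $n\geq 2s$; we will derive a contradiction from Corollary \ref{corcritical}. Let $S_i$ be the full preimage of $\overline{S_i}$ in $\mathcal{A}$.

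\textbf{Applying Corollary \ref{corcritical}.} Take the decomposition of $\mathcal{A}/J$ into the $n$ graded ideals $\overline{S_1},\dots,\overline{S_n}$. The corollary gives a permutation $\sigma$ with
\[
S_{\sigma(1)}S_{\sigma(2)}\cdots S_{\sigma(n)}\neq 0 .
\]

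\textbf{Controlling the product modulo $J$.} For $i\neq j$ we have $\overline{S_i}\,\overline{S_j}=0$, so $S_iS_j\subseteq J$. Group the $n$ factors into consecutive pairs: $(S_{\sigma(1)}S_{\sigma(2)})(S_{\sigma(3)}S_{\sigma(4)})\cdots$. Each pair consists of distinct indices, hence lies in $J$. With $n\geq 2s$ there are at least $s$ such disjoint pairs, any leftover factors are absorbed because $J$ is a two-sided ideal, and so the whole product lies in $J^s=0$. This contradicts the choice of $\sigma$, so $n\leq 2s-1$.

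\textbf{Main obstacle.} The only delicate point is the justification of Corollary \ref{corcritical} itself. One must apply Proposition \ref{criticalconditions} with $L$ a set of homogeneous generators of $J$ (together with homogeneous preimages as needed) and $M_i$ homogeneous generators of $S_i$ modulo $J$. Condition (iii) then requires that every word containing an element from each $M_i$ vanishes, not only ordered products $S_{\sigma(1)}\cdots S_{\sigma(n)}$.

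This is handled by noting that such a word, after grouping letters, lies in a sum of products $\mathcal{A}^{\sharp}S_{\tau(1)}\mathcal{A}^{\sharp}\cdots S_{\tau(n)}\mathcal{A}^{\sharp}$. Since each $S_i$ is an ideal, each such term lies in $S_{\tau(1)}\cdots S_{\tau(n)}$. That is exactly the content of the corollary, which we take as given from Proposition \ref{criticalconditions}. Condition (ii) must also be checked: removing $M_i$ fails to generate $\mathcal{A}$, because the image modulo $J$ would miss $\overline{S_i}$.

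The counting step itself is routine. So the proof reduces to invoking Corollary \ref{corcritical} and observing that products of preimages of distinct simple components fall into $J$, which gives the bound $2s-1$.
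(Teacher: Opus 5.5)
Your proof is correct and essentially the paper's argument: decompose $\mathcal{A}/J_G(\mathcal{A})$ by Theorem \ref{WAgraded}, use that preimages of distinct components multiply into $J_G(\mathcal{A})$ to put every ordered product in $J_G(\mathcal{A})^{[n/2]}$, and conclude from Corollary \ref{corcritical} that $[n/2]<s$, i.e.\ $n\leq 2s-1$. Your sketch of why Corollary \ref{corcritical} follows from Proposition \ref{criticalconditions} is also sound: any word containing a letter from each $M_i$ lies in some $S_{\tau(1)}\cdots S_{\tau(n)}$ because the $S_i$ are ideals, and the paper only calls this a direct consequence.
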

\begin{proof}
By Theorem \ref{WAgraded},  we have that  $\mathcal{A}/J_G(\mathcal{A})= \bigoplus_{i=1}^ t \overline{\mathcal{A}_i}$,
where $\overline{\mathcal{A}_i}$ are $G$-graded simple algebras. Denote  by  $\mathcal{A}_i$ the preimage of $\overline{\mathcal{A}_i}$ under the natural epimorphism $\pi: \mathcal{A}\rightarrow \mathcal{A}/J_G(\mathcal{A})$, for $1\leq i\leq t$. Then
\[\overline{\mathcal{A}_i}\cdot \overline{\mathcal{A}_j}=\{ 0\}, \quad \pi(\mathcal{A}_i\mathcal{A}_j)=\{ 0\}, \quad \mathcal{A}_i\mathcal{A}_j\subseteq J_G(\mathcal{A}).\]
As the algebra $\mathcal{A}$ is associative, we have that
\[ A_{\sigma(1)}\ldots A_{\sigma(t)}\subseteq J_G(\mathcal{A})^{[\frac{t}{2}]},\]
for all $\sigma\in S_t$. By Corollary \ref{corcritical}, it follows that $J_G(\mathcal{A})^{[\frac{t}{2}]}\neq \{ 0\}$. Hence, $[\frac{t}{2}]< s$ and the result follows.
\end{proof}
\begin{theorem}\label{boundedcritical}
A $G$-graded critical algebra $\mathcal{A}$ can be generated by $(4|G| + 1)s-2|G|-1$ homogeneous elements, where $s$ is the nilpotent index of $J_G(\mathcal{A})$.
\end{theorem}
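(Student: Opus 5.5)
The count is dictated by Proposition \ref{maximalnumberofgenerators}. I apply it with $I=J_G(\mathcal{A})$, which is nilpotent of index $s$ by Proposition \ref{nilpJGA}. Then $\mathcal{A}$ is generated by at most $s+d-1$ homogeneous elements, where $d$ is the number of homogeneous generators needed for $\mathcal{A}/J_G(\mathcal{A})$. So it suffices to prove
\[
d\le 2|G|(2s-1)=4|G|s-2|G|,
\]
since then $s+d-1\le (4|G|+1)s-2|G|-1$, which is exactly the bound in the statement. By Proposition \ref{cotasumandosimples}, $\mathcal{A}/J_G(\mathcal{A})=\bigoplus_{i=1}^t \mathcal{B}_i$ with $t\le 2s-1$ finite $G$-simple summands. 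The proof therefore reduces to two facts:
\begin{itemize}
\item[(a)] a direct sum is generated by the union of homogeneous generating sets of its summands;
\item[(b)] every finite $G$-simple algebra is generated by at most $2|G|$ homogeneous elements.
\end{itemize}

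Fact (a) is easy. Each $\mathcal{B}_i$ is a graded ideal with $\mathcal{B}_i\mathcal{B}_j=0$ for $i\neq j$. Hence the graded subalgebra generated by $\bigcup_i S_i$, with $S_i$ generating $\mathcal{B}_i$, contains the subalgebra generated by each $S_i$, which is $\mathcal{B}_i$. So it contains the whole sum, and $d\le \sum_i |S_i|\le 2|G|t$.

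For fact (b), I would use Theorem \ref{Gradedsimplealgebra} and Remark \ref{inducedgrading} to write $\mathcal{B}\cong M_n(\F)\otimes\mathcal{D}$.
\begin{itemize}
\item $\mathcal{D}$ is a finite graded division algebra with support a subgroup $H\le G$.
\item $\mathcal{D}_e$ is a finite division algebra, hence a finite field, so it is generated by one element.
\item Each $\mathcal{D}_h$, $h\in H$, is spanned over $\mathcal{D}_e$ by one invertible homogeneous $d_h$.
\item The degree of $E_{ij}\otimes d$ is $g_i(\deg d)g_j^{-1}$, so I may modify the $g_i$ by elements of $H$ and reorder the basis. Then the $g_i$ fall into at most $[G:H]$ blocks with equal $g_i$ inside each block.
\end{itemize}
The identity component $\mathcal{B}_e$ is then a direct sum of full matrix algebras $M_{k_c}(\mathcal{D}_e)$, one for each block. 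The plan for choosing generators has four parts:
\begin{itemize}
\item[(1)] Generate all diagonal blocks of $\mathcal{B}_e$ with two elements of degree $e$. Take the sum over the blocks of a cyclic permutation matrix, and the sum over the blocks of $E_{11}$-type matrix units each multiplied by a field generator. Equal-degree elements can be added, so this uses only two generators in degree $e$. The step needing care is choosing these diagonal sums so that the subalgebra they generate separates the blocks; I would use distinct block sizes or distinct scalars to get enough idempotents.
\item[(2)] Link consecutive blocks by matrix units $E_{ij}\otimes 1$, one in each direction.
\item[(3)] Add the elements $E_{11}\otimes d_h$ for $h\in H$.
\item[(4)] Again sum generators of equal degree wherever possible.
\end{itemize}
This should give roughly at most two generators per element of $G$, hence at most $2|G|$.

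The main obstacle is exactly this sharp count in (b). The naive tally $2+2([G:H]-1)+|H|$ is already at most $2|G|$ when $[G:H]\le |G|$ and $|H|\le |G|$, provided the block-separation issue in (1) is handled. I would first try to make that rigorous with the naive tally. If separation fails, the fallback is to use one extra idempotent generator per block and bound the total using $[G:H]+|H|\le |G|+1$.
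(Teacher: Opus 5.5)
Your reduction is the same as the paper's: Proposition \ref{maximalnumberofgenerators} with $I=J_G(\mathcal{A})$, the bound $t\le 2s-1$ from Proposition \ref{cotasumandosimples}, and your fact (a). The gap is in fact (b), where all the work lies; your argument for it does not go through.

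Step (1) is false in general. Take $\F=\F_2$, $\mathcal{D}=\F_2$ with trivial grading (so $H=\{e\}$), $n=|G|\ge 4$, and $g_1,\dots,g_n$ pairwise distinct. Then $\mathcal{B}=M_n(\F_2)$ with $\deg E_{ij}=g_ig_j^{-1}$ is $G$-simple, every block has size $1$, and $\mathcal{B}_e$ is the diagonal algebra $\F_2^n$. Every element of the subalgebra generated by two diagonal matrices $a,b$ is $p(a,b)$, with $i$th entry $p(a_i,b_i)$, for one polynomial $p$ without constant term. Since $\F_2^2$ has only three nonzero pairs, some entry is forced to be $0$ or two entries are tied. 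So no two degree-$e$ elements generate $\mathcal{B}_e$. In fact your recipe yields the identity matrix twice, and there are neither distinct block sizes nor distinct nonzero scalars to exploit. The underlying problem is that summing generators of equal degree can shrink the generated subalgebra.

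Your counting is also off. The inequalities $[G:H]\le|G|$ and $|H|\le|G|$ only give $2[G:H]+|H|\le 3|G|$, and for $H=\{e\}$ your tally is $2|G|+1$. The fallback with one idempotent per block costs $3[G:H]+|H|$, which is $3|G|+1$ for $H=\{e\}$. So as written you do not get $2|G|$ generators per summand, and hence not the constant $(4|G|+1)s-2|G|-1$.

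Your plan can be patched. The products $E_{ij}E_{ji}$ of your linking units already give an idempotent in each block, so separation need not come from the degree-$e$ generators, and step (3) is redundant when $H=\{e\}$.

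The paper avoids the block bookkeeping entirely by splitting instead of summing. Take the non-homogeneous cyclic permutation matrix $Z=E_{12}+E_{23}+\cdots+E_{n1}$ and replace it by its homogeneous components. There are at most $|G|$ of them, splitting can only enlarge the generated subalgebra, and $Z$ is recovered as their sum. Add $E_{12}\otimes d_h$ for $h$ in the support of $\mathcal{D}$, where $d_e$ generates the multiplicative group of the finite field $\mathcal{D}_e$ and $0\ne d_h\in\mathcal{D}_h$; that is at most $|G|$ more elements. Then:
\begin{enumerate}
\item $(E_{12}\otimes d_h)Z^{n-1}=E_{11}\otimes d_h$.
\item The powers of $E_{11}\otimes d_e$ span $E_{11}\otimes\mathcal{D}_e$, hence one obtains $E_{11}\otimes\mathcal{D}$, because $\mathcal{D}_h=\mathcal{D}_e d_h$.
\item $Z^a(E_{11}\otimes d)Z^b=E_{1-a,1+b}\otimes d$ (indices mod $n$) runs over all of $M_n(\F)\otimes\mathcal{D}$.
\end{enumerate}
For $n=1$ the $d_h$ alone suffice. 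This gives at most $2|G|$ homogeneous generators per simple summand, with no normalization of the $g_i$.
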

\begin{proof}
By Theorem \ref{WAgraded}, we can write $\mathcal{A}/J_G(\mathcal{A})= \bigoplus_{i=1}^ t \overline{\mathcal{A}_i}$, where $\overline{\mathcal{A}_i}$ are $G$-graded simple algebras.
Using Theorem \ref{Gradedsimplealgebra},  for $1\leq i\leq t$, we have
\[\overline{\mathcal{A}_i}\cong M_{m_i}(\mathcal{D}_i),\]
where $\mathcal{D}_i$ is a $G$-graded division algebra and the $G$-grading on $M_{m_i}(\mathcal{D}_i)$ is as in Remark \ref{inducedgrading}. By Wedderburn's Little Theorem, $\Delta_i=(\mathcal{D}_{i})_e$ is a finite field and $\F\subseteq \Delta_i$. Let $X_{i,e}$ be a  generator of the multiplicative group of $\Delta_i$; choose a nonzero homonegeous element $X_{i,g}$ in $(\mathcal{D}_i)_g$. Then,
\[X_{i,g}X_{i,h}=\sigma(g,h) X_{i,gh},\]
where $\sigma(g,h)\in\Delta_i$. Hence, $\mathcal{D}_i$ is generated, as $G$-graded algebra, by the following set of homogeneous elements
\[E_{\Delta_i}=\{X_{i,g}\mid g\in \textrm{Supp}_G(\mathcal{D}_i)\},\]
where $\textrm{Supp}_G(\mathcal{D}_i)$ denotes the support of the $G$-grading of $\mathcal{D}_i$. It follows  that $| E_{\Delta_i}| \leq |G|$. Note that $M_{m_i}(\mathcal{D}_i)$ is generated by the elements 
\[W_{i,g}= X_{i,g}e_{12},\quad Z= e_{12}+e_{23}+\cdots + e_{(m_i-1)m}+ e_{m_{i}1},\]
for $g\in G$. Therefore, this algebra can be generated by $|E_{\Delta_i}|+1$ elements. Thus, using Remark \ref{gradedgenerators}, we can consider the homogeneous components of $Z$ together with the homogeneous elements $W_{i,g}$, therefore the algebra $\mathcal{A}/J_G(\mathcal{A})$ can be generated by at most $2|G|t$ homogeneous elements. On the other hand, using Proposition \ref{cotasumandosimples}, $t\leq 2s-1$. Then, by Proposition \ref{maximalnumberofgenerators}, $\mathcal{A}$ can be generated by at most $2|G|t+s-1$ homogeneous elements. We have 
\[2|G|t+s-1\leq 2|G|(2s-1)+s-1=(4|G|+1)s-2|G|-1\]
and the result follows.
%= s(2|G|(|G|+1)+1)-|G|(|G|+1)-1
\end{proof}

%Notice that 
According to Remark \ref{conditioncross}, we can see that in order to prove that the variety generated by a $G$-graded finite algebra $\mathcal{A}$ is Cross it will be sufficient to prove that there is a finite collection of $G$-graded identities $\{f_1,\ldots, f_k\}$ of $\mathcal{A}$ such that the $G$-graded critical algebras that satisfy these identities could be generated by a bounded number of homogeneous elements. In light of Theorem \ref{boundedcritical},  it is sufficient to show that these $G$-graded critical algebras have a common bound for the nilpotency indices of their $G$-graded Jacobson radicals.\\
\indent
Observe that a variety  $\V$ of $G$-graded algebras has an infinite number of subvarieties if the indices of its nilpotent $G$-graded algebras is not bounded. In particular, $\V$ cannot be Cross. Thus, our following step will be to prove that in the variety of $G$-graded algebras generated by $\mathcal{A}$, the nilpotent $G$-graded algebras have bounded nilpotent index and then show that this property implies a finite number of $G$-graded identities of $\mathcal{A}$. Then, having this in mind, we give the following definition:

\begin{definition}
Let $\V$ be a variety of $G$-graded algebras. The nilpotency index of $\V$ is the upper bound of the index of nilpotency of its $G$-graded nilpotent algebras. 
\end{definition}
Thus, in light of of item (iii) of Corollary \ref{locallyfinite2} and the discussion above, we obtain:
\begin{proposition}\label{finiteindex}
A Cross variety of $G$-graded algebras has finite nilpotency index.
\end{proposition}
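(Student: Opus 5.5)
We argue by contraposition: if the nilpotent $G$-graded algebras in a variety $\V$ have unbounded nilpotency indices, we produce infinitely many subvarieties of $\V$. This contradicts the fact that a Cross variety has only finitely many subvarieties. That fact follows from Proposition \ref{locallyfinite2}(iii), or equivalently item (v), because a Cross variety is locally finite with finitely many critical algebras.

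Assume $\V$ is Cross and, for contradiction, that its nilpotency index is infinite. Then for every $n$ there is a nilpotent $G$-graded algebra $\mathcal{N}_n\in\V$ of nilpotency index exactly larger than $n$, i.e.\ $\mathcal{N}_n^{n}\neq 0$.

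For each $n$, consider the subvariety $\V_n\subseteq \V$ defined inside $\V$ by all identities
\[
x_1^{(g_1)}x_2^{(g_2)}\cdots x_n^{(g_n)}=0,\qquad (g_1,\dots,g_n)\in G^n .
\]
Since $G$ is finite, this is a finite set of identities. The $G$-graded algebras in $\V_n$ are exactly the algebras of $\V$ satisfying $\mathcal{B}^n=0$. Here we use that every element is a sum of homogeneous components and the product is multilinear, so vanishing on homogeneous arguments forces $\mathcal{B}^n=0$.

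Clearly $\V_1\subseteq\V_2\subseteq\cdots$. Now $\mathcal{N}_n$ lies in $\V_m$ for some $m>n$, because it is nilpotent. On the other hand $\mathcal{N}_n\notin\V_n$, since $\mathcal{N}_n^n\neq 0$. Hence for infinitely many indices the inclusions are strict. Passing to a subsequence, we obtain infinitely many pairwise distinct subvarieties of $\V$.

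By Proposition \ref{locallyfinite2}(iii), a locally finite variety with exactly $k$ critical algebras has at most $2^k$ subvarieties. A Cross variety has finitely many critical algebras, so it has finitely many subvarieties. This contradicts the infinite chain above, so the nilpotency index of $\V$ is finite.

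The one step requiring any care is checking that $\V_n$ is genuinely a subvariety whose members are exactly the algebras with $\mathcal{B}^n=0$. This reduces to the multilinearity observation about homogeneous components made above. Everything else is a direct appeal to Proposition \ref{locallyfinite2}.
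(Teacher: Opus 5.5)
Your proof is correct and follows the paper's own reasoning: the paper likewise obtains the proposition from the observation that unbounded nilpotency indices give infinitely many subvarieties, which contradicts Proposition \ref{locallyfinite2}(iii). You make that observation explicit through the strictly growing chain of subvarieties $\V_n$ defined by the identities $x_1^{(g_1)}\cdots x_n^{(g_n)}=0$, which the paper leaves unstated.
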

We shall prove that the converse of previous assertions holds. For this, we establish the following lemmas:

\begin{lemma}\label{criterio2}
Let $\V$ a variety of $G$-graded algebras. Then, $\V$ has nilpotency index $\leq n$ if and only if for each $\mathcal{A}\in\V$
\[\mathcal{A}^{n}=\mathcal{A}^{k},\quad \forall k\geq n.\]
\end{lemma}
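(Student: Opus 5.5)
The equivalence splits into two directions; the easy one is ($\Leftarrow$), and the other rests on a quotient construction.

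\emph{($\Leftarrow$).} Let $\mathcal{A}\in\V$ be $G$-graded nilpotent, say $\mathcal{A}^m=\{0\}$. If $m\le n$ then already $\mathcal{A}^n=\{0\}$. If $m>n$, the hypothesis with $k=m$ gives $\mathcal{A}^n=\mathcal{A}^m=\{0\}$. Hence every nilpotent algebra in $\V$ has index at most $n$, and the nilpotency index of $\V$ is $\le n$.

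\emph{($\Rightarrow$).} Assume every $G$-graded nilpotent algebra in $\V$ has index $\le n$. Fix $\mathcal{A}\in\V$ and $k\ge n$. Since $\mathcal{A}^k\subseteq\mathcal{A}^{n+1}\subseteq\mathcal{A}^n$, it suffices to show $\mathcal{A}^n=\mathcal{A}^{n+1}$; then $\mathcal{A}^n=\mathcal{A}^{n+1}=\mathcal{A}^{n+2}=\cdots$ by induction, because $\mathcal{A}^{j+1}=\mathcal{A}^j\mathcal{A}$.

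\begin{itemize}
\item \emph{Pass to a quotient.} Let $I=\mathcal{A}^{n+1}$. It is a $G$-graded two-sided ideal, since products of homogeneous elements are homogeneous and $\mathcal{A}^{n+1}$ is spanned by such products. So $\mathcal{B}=\mathcal{A}/I$ is a $G$-graded algebra.
\item \emph{The quotient lies in $\V$ and is nilpotent.} Varieties are closed under homomorphic images, so $\mathcal{B}\in\V$. Moreover $\mathcal{B}^{n+1}=(\mathcal{A}^{n+1}+I)/I=\{0\}$, so $\mathcal{B}$ is $G$-graded nilpotent.
\item \emph{Apply the hypothesis.} The index of $\mathcal{B}$ is then $\le n$, so $\mathcal{B}^n=\{0\}$, i.e.\ $\mathcal{A}^n\subseteq I=\mathcal{A}^{n+1}$. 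Combined with the reverse inclusion, $\mathcal{A}^n=\mathcal{A}^{n+1}$.
\end{itemize}

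There is no real obstacle. The only point needing care is that $\mathcal{A}^{n+1}$ is a graded ideal, so that the quotient stays in the class of $G$-graded algebras of $\V$. Here "nilpotent index" is read as the least $m$ with $\mathcal{A}^m=0$.
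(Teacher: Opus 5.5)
Your proof is correct and matches the paper's argument: pass to the nilpotent quotient $\mathcal{A}/\mathcal{A}^{n+i}$, which lies in $\V$, and conclude $\mathcal{A}^n\subseteq\mathcal{A}^{n+i}$. The paper applies this to every $i$ at once rather than only $i=1$ plus induction, and it leaves as ``straightforward'' the converse that you spell out correctly.
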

\begin{proof}
First, suppose that $\V$ has nilpotency index $\leq n$ and consider $\mathcal{A}\in\V$. Thus, we have the following descending chain of ideals
\[\mathcal{A}^n\supseteq\mathcal{A}^{n+1}\supseteq\mathcal{A}^{n+2}\supseteq\mathcal{A}^{n+3}\supseteq\cdots\]
Since $\mathcal{A}/\mathcal{A}^{n+i}$ is nilpotent and belongs to $\V$, we have that
 \[(\mathcal{A}/\mathcal{A}^{n+i})^n=\mathcal{A}^n/\mathcal{A}^{n+i}=\{ 0\}.\] 
Hence, $\mathcal{A}^n\subseteq\mathcal{A}^{n+i}=\{ 0\}$ and the result follows. The converse is straightforward.
\end{proof}

\begin{lemma}\label{decompe}
Let $G$ be a group of order $n$. Then if $m> nk$ and $M$ is a monomial of degree $m$ in the free $G$-graded algebra $\F\langle X_G\rangle$ then \[M=UM_1\cdots M_kV,\] where $U,V,M_1,\dots, M_k$ are monomials in $\F\langle X_G\rangle$, $M_1,\dots, M_k$ have degree $e$ and the monomials $U$ or $V$ may be the empty.
\end{lemma}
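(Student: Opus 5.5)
The approach is the standard pigeonhole argument on prefix degrees: we locate $k+1$ cut points in $M$ at which the partial products have the same $G$-degree. Write $M=y_1y_2\cdots y_m$, where each $y_j$ is a homogeneous variable of degree $g_j=\deg_G(y_j)$. For $0\le i\le m$ set $h_i=g_1g_2\cdots g_i$, the degree of the prefix $y_1\cdots y_i$, with $h_0=e$ corresponding to the empty prefix. This gives $m+1$ group elements $h_0,h_1,\dots,h_m$ in $G$. Since $m+1>nk+1>nk$ and $|G|=n$, the pigeonhole principle yields some $g\in G$ with at least $k+1$ indices $0\le i_0<i_1<\cdots<i_k\le m$ such that $h_{i_0}=h_{i_1}=\cdots=h_{i_k}=g$. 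Indeed, if every fibre had at most $k$ elements we would have $m+1\le nk$, a contradiction.

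Next we read off the factorization from these indices. Define
\[
U=y_1\cdots y_{i_0},\qquad M_j=y_{i_{j-1}+1}\cdots y_{i_j}\ (1\le j\le k),\qquad V=y_{i_k+1}\cdots y_m .
\]
Then $M=UM_1\cdots M_kV$. Each $M_j$ is a nonempty monomial, because $i_{j-1}<i_j$. Its degree is
\[
h_{i_{j-1}}^{-1}h_{i_j}=g^{-1}g=e,
\]
since the prefix of length $i_j$ equals the prefix of length $i_{j-1}$ times $M_j$. The monomial $U$ is empty exactly when $i_0=0$, and $V$ is empty exactly when $i_k=m$, which the statement allows.

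There is no genuine obstacle here. The only point needing care is to count the empty prefix $h_0$, so that there are $m+1$ prefix degrees rather than $m$. In fact $m\ge nk$ would already suffice, so the hypothesis $m>nk$ leaves room to spare. One should also check that the $M_j$ have degree exactly $e$ as elements of the free graded algebra, which holds because the $G$-degree of a monomial is the ordered product of the degrees of its variables.
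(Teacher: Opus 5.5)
Your proof is correct and is essentially the paper's argument: pigeonhole on the $G$-degrees of the prefixes $g_1\cdots g_j$, then cutting $M$ at $k+1$ positions with equal prefix degree. Counting the empty prefix $h_0=e$, as you do, makes the count cleaner than the paper's version. The paper takes $1\le j_1<\cdots<j_{k+1}<m$, which leaves only $m-1$ positions and is not enough when $m=nk+1$; allowing $j_{k+1}\le m$, or including $j=0$ as you did, repairs this.
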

\begin{proof}
  Let \[
M=x_{i_1}^{(g_1)}x_{i_2}^{(g_2)}\cdots x_{i_m}^{(g_m)},\] and let $p_j=g_1g_2\cdots g_j$, for $1\leq j \leq m$. Since $m>nk$ and $|G|=n$ the pigeonhole principle implies that there exist $1\leq j_1<\cdots<j_{k+1}< m$ such that \[p_{j_1}=\cdots=p_{j_{k+1}}.\] Then for $l=1,\dots, k$ the monomials \[M_l=x_{i_{{j_l}+1}}^{(g_{j_{l}+1})}\cdots x_{i_{j_{l+1}}}^{(g_{j_{l+1}})},\] have degree $p_{j_l}^{-1}p_{j_{l+1}}=e$. Hence we obtain the desired decomposition if we set $U = x_{i_1}^{(g_1)}\cdots x_{i_{j_1}}^{(g_{j_1})},$ and $V=x_{i_{j_{k+1}+1}}^{(g_{j_{k+1}+1})}\cdots x_{i_m}^{(g_m)}$.
\end{proof}

\begin{corollary}\label{identitiesfiniteindex}
Let $\V$ be a variety of $G$-graded algebras. Then, $\V$ has a bounded nilpotency index,
%$\V^{k}=\V^{k+1}$ for any $\V\in\V$, 
if and only if, $\V$ satisfies a $G$-graded identity 
\begin{equation}\label{ide}
x_{1}^{(e)}\cdots x_t^{(e)}=f(x_{1}^{(e)},\dots, x_t^{(e)}),
\end{equation}
where, $f(x_{1}^{(e)}\cdots x_t^{(e)})$ is a $G$-graded polynomial that has lowest ordinary degree $\geq t+1$.
\end{corollary}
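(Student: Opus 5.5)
Both directions will be proved separately, with Lemma \ref{criterio2} linking nilpotency index to the stabilization $\mathcal{A}^n=\mathcal{A}^k$, and Lemma \ref{decompe} used to pass from identity-component monomials to arbitrary monomials.

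\emph{Necessity.} Suppose $\V$ has nilpotency index $\leq t$. I will work in the relatively free algebra $F=F_t(\V)$ on the generators $x_1^{(e)},\dots,x_t^{(e)}$. Since only generators of degree $e$ are used, I replace $F$ by its graded subalgebra $B$ generated by these $t$ elements; $B$ lies in $\V$ and is concentrated in degree $e$. By Lemma \ref{criterio2}, $B^t=B^{t+1}$. Hence the element $x_1^{(e)}\cdots x_t^{(e)}\in B^t$ lies in $B^{t+1}$. Elements of $B^{t+1}$ are images of polynomials in $x_1^{(e)},\dots,x_t^{(e)}$ with all monomials of degree $\geq t+1$. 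Lifting to the free algebra gives a polynomial $f$ with lowest degree $\geq t+1$ such that $x_1^{(e)}\cdots x_t^{(e)}-f$ vanishes in $B$. Because $B$ is free in $\V$ on these generators (restricted to degree-$e$ variables), this difference is an identity of $\V$. This yields (\ref{ide}).

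\emph{Sufficiency.} Assume (\ref{ide}) holds in $\V$, and let $\mathcal{A}\in\V$ be nilpotent with $|G|=n$. I claim $\mathcal{A}^{nt+1}=0$, which gives the bound $nt+1$. It suffices to show that $\mathcal{A}^{nt+1}\subseteq \mathcal{A}^{nt+2}$, since then iterating gives $\mathcal{A}^{nt+1}=\mathcal{A}^N=0$ for large $N$. The space $\mathcal{A}^{nt+1}$ is spanned by products of $m=nt+1>nt$ homogeneous elements. Applying Lemma \ref{decompe} to the corresponding monomial in the free algebra and evaluating, each such product can be written as $u\,m_1\cdots m_t\,v$, where $m_1,\dots,m_t\in\mathcal{A}_e$ are themselves products and the total length is $m$. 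Substituting $x_i^{(e)}\mapsto m_i$ in (\ref{ide}) replaces $m_1\cdots m_t$ by a combination of products of at least $t+1$ of the $m_i$'s. Each such term has strictly more homogeneous factors than the original, so it lies in $\mathcal{A}^{m+1}$. This proves the inclusion, and the bound $nt+1$ depends only on $\V$.

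The main obstacle is the bookkeeping in the sufficiency step. One must check that substituting into $f$ genuinely increases the count of original factors. This holds because each $m_i$ is a nonempty product, so a product of $\geq t+1$ of them has length at least the length of $m_1\cdots m_t$ plus one. One must also handle the empty $U$ or $V$ in Lemma \ref{decompe}, which is harmless. In the necessity direction, the only point needing care is that $B$ is relatively free on the degree-$e$ variables, so that a relation in $B$ is an identity of $\V$. This follows from the definition of $F_t(\V)$, since any evaluation of $x_i^{(e)}$ in an algebra of $\V$ factors through $F_t(\V)$.
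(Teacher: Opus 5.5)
Your necessity direction is the paper's argument (the subalgebra of $F_t(\V)$ generated by $x_1^{(e)},\dots,x_t^{(e)}$, Lemma \ref{criterio2}, then lifting $x_1^{(e)}\cdots x_t^{(e)}\in\mathcal{G}^{t+1}$), and it is fine.

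The sufficiency direction has a genuine gap, at exactly the bookkeeping step you singled out. The claim that a product of at least $t+1$ of the $m_i$ has length at least $|m_1|+\cdots+|m_t|+1$ is false. Nonemptiness of the $m_i$ only gives length $\geq t+1$. The monomials of $f$ need not contain every variable, and the blocks produced by Lemma \ref{decompe} can have very different lengths.

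For example, suppose $f$ contains the monomial $(x_1^{(e)})^{t+1}$, $m_1$ is a single factor, and $m_2$ is a product of $\ell\geq 2$ factors. Then the term $u\,m_1^{t+1}v$ has length $|u|+|v|+t+1$. But $u\,m_1\cdots m_t\,v$ already has length at least $|u|+|v|+\ell+t-1\geq |u|+|v|+t+1$. So you only get membership in $\mathcal{A}^{m}$, not $\mathcal{A}^{m+1}$.

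A concrete case: take $|G|=2$, $t=2$, $f=(x_2^{(e)})^3$, and the product $a_1a_2a_3a_4a_5$ with $a_2,a_3$ of degree $g\neq e$ and the others of degree $e$. Lemma \ref{decompe} forces the split $a_1(a_2a_3)(a_4)a_5$, and the identity turns it into $a_1a_4^3a_5$, still of length $5$. Hence $\mathcal{A}^{nt+1}\subseteq\mathcal{A}^{nt+2}$ is not established by your argument.

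The fix is to measure powers in $\mathcal{A}_e$ rather than in $\mathcal{A}$, which is what the paper does, citing Lvov's Corollary 2.7.
\begin{itemize}
\item For $b_1,\dots,b_t\in\mathcal{A}_e$ the identity gives $b_1\cdots b_t=f(b_1,\dots,b_t)\in(\mathcal{A}_e)^{t+1}$, so $(\mathcal{A}_e)^t=(\mathcal{A}_e)^{t+1}=\cdots$.
\item Since $\mathcal{A}_e\subseteq\mathcal{A}$ is nilpotent, this forces $(\mathcal{A}_e)^t=0$. Here each $m_i$ counts as a single factor, so the degree increase coming from $f$ is automatic, whatever the $m_i$ look like.
\item Lemma \ref{decompe} then shows that every product of $nt+1$ homogeneous elements equals $u\,m_1\cdots m_t\,v$ with $m_i\in\mathcal{A}_e$. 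Hence it is $0$, and $\mathcal{A}^{nt+1}=0$ directly, with no iteration in $\mathcal{A}$ needed.
\end{itemize}
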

\begin{proof}
Let $\mathcal{B}$ be a nilpotent algebra in $\V$. The identity (\ref{ide}) and \cite[Corollary 2.7]{Lvov} imply that $\mathcal{B}_e$ is nilpotent of degree $\leq t$. Lemma \ref{decompe} implies that any product $b=b_1\cdots b_m$ of $m>t|G|$ homogeneous elements can be written as \[b=um_1\cdots m_tv,\] where $m_1,\dots, m_t\in \mathcal{B}_e$. Since $(\mathcal{B}_e)^t=\{ 0\}$ we conclude that $b=0$. Thus
$\mathcal{B}$ has index $\leq t|G|$. As a consequence, $\V$ has bounded nilpotency index. For the converse, assume that $\V$ has nilpotency index $\leq t$. And let $F_t(\V)$ be the free algebra of rank $t|G|$ in $\V$. The subalgebra $\mathcal{G}$ of $F_t(\V)$ generated by $x_1^{(e)},\dots, x_t^{(e)}$ lies in $\V$, therefore $\mathcal{G}^t=\mathcal{G}^{t+1}$. Then $x_1^{(e)}\cdots x_t^{(e)}$ lies in $\mathcal{G}^{t+1}$ and there exists $f(x_{1}^{(e)}\cdots x_t^{(e)})$ with lowest ordinary degree $\geq t+1$ such that \[x_{1}^{(e)}\cdots x_t^{(e)}=f(x_{1}^{(e)},\dots, x_t^{(e)}).\] This gives the desired identity in $\V$.
\end{proof}

%%%%%%%%%%%%%%%%%%%%
\begin{corollary}\label{subvarietiefinitebased}
Let $\V$ be a variety of $G$-graded algebras such that its nilpotency index is finite. Then, there is a finitely based variety $\mathfrak{M}$ of $G$-graded algebras, with the same nilpotency index as $\V$, such 
that $\V\subseteq\mathfrak{M}$.
\end{corollary}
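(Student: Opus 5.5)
The plan is to read off $\mathfrak{M}$ directly from the identity supplied by Corollary \ref{identitiesfiniteindex}. Suppose $\V$ has nilpotency index $n<\infty$. By the converse direction of Corollary \ref{identitiesfiniteindex}, applied with $t=n$, $\V$ satisfies a $G$-graded identity
\[
x_1^{(e)}\cdots x_t^{(e)}=f(x_1^{(e)},\dots,x_t^{(e)}),
\]
where every monomial of $f$ has degree $\geq t+1$. We take $\mathfrak{M}_0$ to be the variety defined by this single identity. It is finitely based, and $\V\subseteq\mathfrak{M}_0$. Moreover, the direct direction of Corollary \ref{identitiesfiniteindex} (via \cite[Corollary 2.7]{Lvov} and Lemma \ref{decompe}) shows that every nilpotent algebra in $\mathfrak{M}_0$ has index $\leq t|G|$. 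So $\mathfrak{M}_0$ has finite nilpotency index, but possibly larger than $n$.

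To make the indices equal, we adjoin finitely many further identities of $\V$. Let $N=t|G|$. The nilpotent algebras in $\V$ satisfy $\mathcal{B}^n=0$. Consider all multilinear monomials $x_1^{(g_1)}\cdots x_n^{(g_n)}$ with $(g_1,\dots,g_n)\in G^n$; there are finitely many since $G$ is finite. By the same argument as in the converse part of Corollary \ref{identitiesfiniteindex}, carried out in $F_n(\V)$ on the subalgebra generated by $x_1^{(g_1)},\dots,x_n^{(g_n)}$ and using Lemma \ref{criterio2}, each such monomial equals, modulo $T_G(\V)$, a polynomial $f_{\bar g}$ all of whose monomials have degree $\geq n+1$. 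We let $\mathfrak{M}$ be defined by the identity above together with the finitely many identities $x_1^{(g_1)}\cdots x_n^{(g_n)}=f_{\bar g}$. Then $\mathfrak{M}$ is finitely based and $\V\subseteq\mathfrak{M}$, so the nilpotency index of $\mathfrak{M}$ is at least $n$.

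For the reverse inequality, take a nilpotent $\mathcal{B}\in\mathfrak{M}$. Every product of $n$ homogeneous elements of $\mathcal{B}$ is a linear combination of longer products, by the added identities. Iterating this, and using that $\mathcal{B}$ is nilpotent, every product of $n$ homogeneous elements is $0$; by linearity $\mathcal{B}^n=0$. So the nilpotency index of $\mathfrak{M}$ is at most $n$, and equality holds.

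The main obstacle is making sure the $f_{\bar g}$ exist in the graded multilinear setting. The subalgebra generated by $x_1^{(g_1)},\dots,x_n^{(g_n)}$ lies in $\V$ and satisfies $\mathcal{G}^n=\mathcal{G}^{n+1}$ by Lemma \ref{criterio2}, which yields the $f_{\bar g}$. One also has to note that the iteration in the last step terminates because of nilpotency. In fact, once the $f_{\bar g}$ are adjoined, the first identity is only needed for local consistency and could even be dropped.
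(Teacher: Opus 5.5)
Your proof is correct, and it is more careful than the paper's. The paper's proof is the single line ``direct consequence of Corollary \ref{identitiesfiniteindex}'': it takes $\mathfrak{M}$ to be the variety defined by the one neutral-component identity $x_1^{(e)}\cdots x_t^{(e)}=f(x_1^{(e)},\dots,x_t^{(e)})$.

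As you noticed, that identity only bounds the nilpotency index of $\mathfrak{M}$ by $t|G|$, and equality with the index of $\V$ can genuinely fail. Take $G=\mathbb{Z}_2$ and $\V=\var(\F)$ with $\F$ trivially graded. Then $\V$ contains no nonzero nilpotent algebras. Yet the one-dimensional algebra concentrated in the nontrivial component, with zero product, satisfies every identity of the form $x_1^{(e)}=f(x_1^{(e)})$ and has index $2$.

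Your extra step fixes this. For each of the finitely many $\bar g\in G^n$ you adjoin the identity $x_1^{(g_1)}\cdots x_n^{(g_n)}=f_{\bar g}$, extracted from $\mathcal{G}^n=\mathcal{G}^{n+1}$ in $F_n(\V)$ via Lemma \ref{criterio2}. This gives $\mathcal{B}^n\subseteq\mathcal{B}^{n+1}$ for every $\mathcal{B}\in\mathfrak{M}$, hence index exactly $n$, so you prove the statement as written.

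What the paper's shortcut buys is economy. In Theorem \ref{criteriocross} only two things are used: finiteness of the index of $\mathfrak{M}$ (to bound the index of $J_G$ of critical algebras in Theorem \ref{boundedcritical}), and the neutral-component identity (for Proposition \ref{criteriolocallyfinite}). Your $\mathfrak{M}$ also contains that identity, as the case $\bar g=(e,\dots,e)$, so it serves equally well there.
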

\begin{proof}
It is a direct consequence of Corollary \ref{identitiesfiniteindex}.
\end{proof}

\begin{theorem}\label{criteriocross}
A variety $\V$ of finite $G$-graded algebras is Cross if and only if it has  finite nilpotency index.
\end{theorem}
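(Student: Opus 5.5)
The forward direction is Proposition \ref{finiteindex}: a Cross variety has finite nilpotency index. So the work is the converse. Let $\V$ be a variety of finite $G$-graded algebras with nilpotency index $n<\infty$. I read the hypothesis as saying that $\V$ is locally finite, for instance $\V=\var(\mathcal{A})$ with $\mathcal{A}$ finite.

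The plan is to enlarge $\V$ to a finitely based, locally finite variety $\mathfrak{M}$ whose critical algebras are generated by a bounded number of homogeneous elements. Then the argument of Remark \ref{conditioncross} (Lvov's Lemma 2.3) shows that $\mathfrak{M}$ is Cross, and Theorem \ref{crosssubvariety} gives that $\V\subseteq \mathfrak{M}$ is Cross.

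\textbf{Step 1 (finitely many identities).} Since $\V$ is locally finite, Proposition \ref{criteriolocallyfinite} gives an identity $(x_1^{(e)})^k=(x_1^{(e)})^l$ with $k>l>0$ in $\V$. By Corollary \ref{identitiesfiniteindex}, $\V$ also satisfies an identity
\[x_1^{(e)}\cdots x_t^{(e)}=f(x_1^{(e)},\dots,x_t^{(e)})\]
with $f$ of lowest degree $\ge t+1$. Let $\mathfrak{M}$ be the variety defined by these two identities. By Proposition \ref{criteriolocallyfinite}, $\mathfrak{M}$ is locally finite. By the first half of the proof of Corollary \ref{identitiesfiniteindex}, every nilpotent algebra in $\mathfrak{M}$ has index at most $N:=t|G|$.

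\textbf{Step 2 (bound on critical algebras).} Let $\mathcal{B}\in\mathfrak{M}$ be critical. It is finite, so by Proposition \ref{nilpJGA} the ideal $J_G(\mathcal{B})$ is nilpotent. The point is that $J_G(\mathcal{B})$, viewed as a $G$-graded algebra in its own right, is a nilpotent member of $\mathfrak{M}$: it is a graded subalgebra, and varieties are closed under subalgebras. Hence its nilpotency index $s$ satisfies $s\le N$. Theorem \ref{boundedcritical} then says $\mathcal{B}$ is generated by at most $(4|G|+1)N-2|G|-1$ homogeneous elements, a bound independent of $\mathcal{B}$.

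\textbf{Step 3 (Cross).} In the locally finite variety $\mathfrak{M}$, the relatively free algebra $F_r(\mathfrak{M})$ of bounded rank $r$ is finite. So by Proposition \ref{locallyfinite2}(iv) there are only finitely many $r$-generated algebras in $\mathfrak{M}$, and in particular only finitely many critical ones. Thus $\mathfrak{M}$ is locally finite, finitely based, and has finitely many critical algebras, i.e. $\mathfrak{M}$ is Cross. By Theorem \ref{crosssubvariety}, its subvariety $\V$ is Cross.

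The main obstacle is the bookkeeping in Step 2. One must make sure the index bound applies to $J_G(\mathcal{B})$ as an algebra, and not only to nilpotent algebras defined independently. This is settled by subalgebra closure together with the fact that in Corollary \ref{identitiesfiniteindex} the bound $t|G|$ comes from the identity alone. One should also check that the identity of Corollary \ref{identitiesfiniteindex}, which was derived for $\V$, is genuinely an identity of $\V$ and not merely of its nilpotent members. Its converse proof produces it from the relatively free algebra $F_t(\V)$, which is finite by local finiteness, so this holds.
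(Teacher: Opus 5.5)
Your argument is correct and is essentially the paper's proof. You enlarge $\V$ to the finitely based variety $\mathfrak{M}$ defined by the identity of Corollary \ref{identitiesfiniteindex}, get local finiteness from Proposition \ref{criteriolocallyfinite}, bound the homogeneous generators of critical algebras of $\mathfrak{M}$ via Theorem \ref{boundedcritical}, and pass to $\V$ by Theorem \ref{crosssubvariety}. Your observation that $J_G(\mathcal{B})\in\mathfrak{M}$ by subalgebra closure makes explicit a step the paper leaves implicit.

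The only difference is that you also impose $(x_1^{(e)})^k=(x_1^{(e)})^l$, which forces you to assume $\V$ locally finite. The paper uses the nilpotency identity alone: setting all its variables equal already gives condition (iii) of Proposition \ref{criteriolocallyfinite}, so that assumption is unnecessary.
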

\begin{proof}
Due to Proposition \ref{finiteindex}, we only need to prove the converse of the theorem. Let $\V$ be a variety of $G$-graded algebras that has nilpotency index $k$. By Corollary  \ref{subvarietiefinitebased}, there is a variety $\mathfrak{M}$  of $G$-graded algebras of nilpotency index  $k$ such that is finitely based and $\V\subseteq\mathfrak{M}$. Note that $\mathfrak{M}$ is defined by the $G$-graded identity given in Corollary \ref{identitiesfiniteindex}. Then, using Proposition \ref{criteriolocallyfinite} and Theorem \ref{boundedcritical}, we have that $\mathfrak{M}$ is locally finite and has a finite number of $G$-graded critical algebras. It follows that $\mathfrak{M}$ is a Cross variety. Hence, by Theorem \ref{crosssubvariety}, $\V$ is also Cross.
\end{proof}

\begin{proposition}\label{indexoffinitealgebra}
Let $\mathcal{A}$ be a finite $G$-graded algebra. Then 
the nilpotency index of $\var(\mathcal{A})$ equals the maximum nilpotency index among the $G$-graded nilpotent subalgebras of $\mathcal{A}$.
\end{proposition}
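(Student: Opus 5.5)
Let $n$ be the maximum nilpotency index of the $G$-graded nilpotent subalgebras of $\mathcal{A}$; it is finite because $\mathcal{A}$ is finite. One inequality is immediate. Every nilpotent graded subalgebra of $\mathcal{A}$ lies in $\var(\mathcal{A})$, so the nilpotency index of $\var(\mathcal{A})$ is at least $n$. The real work is the reverse inequality: every nilpotent algebra $\mathcal{B}\in\var(\mathcal{A})$ satisfies $\mathcal{B}^n=\{0\}$.

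First I reduce to finitely generated algebras. $\mathcal{B}^n=\{0\}$ means that every product $b_1\cdots b_n$ of homogeneous elements vanishes. Such a product lives in the graded subalgebra generated by $b_1,\ldots,b_n$, which is again nilpotent and in the variety. So I may assume $\mathcal{B}$ is generated by $d$ homogeneous elements. Since $\var(\mathcal{A})$ is locally finite, $\mathcal{B}$ is then finite.

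Next I use the fact that $\var(\mathcal{A})$ is generated by the single finite algebra $\mathcal{A}$. By the standard Birkhoff description, the relatively free algebra $F_d(\var(\mathcal{A}))$ embeds as a graded subalgebra of a finite direct power $\mathcal{A}^N$. One takes $N$ to be the number of graded maps $X_{G,d}\to\mathcal{A}$, and the $d|G|$ free generators become the tuples of their images. Consequently $\mathcal{B}$ is a graded homomorphic image of a graded subalgebra $\mathcal{C}\subseteq\mathcal{A}^N$, through some epimorphism $\psi:\mathcal{C}\to\mathcal{B}$.

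The key step is to replace $\mathcal{C}$ by a nilpotent graded subalgebra of $\mathcal{A}^N$ that still maps onto $\mathcal{B}$. I would use Lemma \ref{indexepimorphism}: since $\mathcal{B}$ is nilpotent, $J_G(\mathcal{B})=\mathcal{B}$, so $\psi(J_G(\mathcal{C}))=\mathcal{B}$. By Proposition \ref{nilpJGA}, $J_G(\mathcal{C})$ is a nilpotent graded ideal, in particular a nilpotent graded subalgebra of $\mathcal{A}^N$, and it maps onto $\mathcal{B}$. It therefore suffices to show that every nilpotent graded subalgebra $\mathcal{N}$ of $\mathcal{A}^N$ satisfies $\mathcal{N}^n=\{0\}$, because then $\mathcal{B}^n=\psi(\mathcal{N}^n)=\{0\}$ with $\mathcal{N}=J_G(\mathcal{C})$.

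I expect this last claim, the passage from $\mathcal{A}$ to its direct powers, to be the main obstacle. The projections $\pi_i:\mathcal{A}^N\to\mathcal{A}$ are graded homomorphisms, so each $\pi_i(\mathcal{N})$ is a nilpotent graded subalgebra of $\mathcal{A}$. Hence $\pi_i(\mathcal{N})^n=\{0\}$ for every $i$. An element of $\mathcal{A}^N$ is zero exactly when all its coordinates are zero, and $\pi_i(\mathcal{N}^n)=\pi_i(\mathcal{N})^n$. Therefore $\mathcal{N}^n=\{0\}$.

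The care needed is in checking two things. First, that Lemma \ref{indexepimorphism} applies, which it does because $\mathcal{C}$ and $\mathcal{B}$ are finite. Second, that the embedding of the relatively free algebra into $\mathcal{A}^N$ respects the grading, which holds because the grading on $\mathcal{A}^N$ is componentwise. Together with the reduction to finitely generated $\mathcal{B}$, this gives nilpotency index of $\var(\mathcal{A})$ at most $n$, and so exactly $n$.
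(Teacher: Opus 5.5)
Your proof is correct and follows essentially the same route as the paper. You embed the relatively free algebra into a finite direct power of $\mathcal{A}$, observe that its graded Jacobson radical is nilpotent with nilpotent coordinate projections (hence of index at most the maximum $n$), and push it onto the nilpotent target via Lemma \ref{indexepimorphism}. The only difference is that the paper applies this to the quotient $F_s(\var(\mathcal{A}))/F_s(\var(\mathcal{A}))^l$ through Lemma \ref{criterio2}, whereas you apply it directly to an arbitrary finitely generated nilpotent $\mathcal{B}$, which spells out the reduction to finitely generated algebras that the paper leaves implicit.
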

\begin{proof}
Observe that it is sufficient to prove that the nilpotent index of $\var(\mathcal{A})$ does not exceed $s$. By Lemma \ref{criterio2}, it is enough to prove that $F_s(\var(A))^s=F_s(\var(A))^l$ for any $l>s$. Notice that this equivalent to showing that 
$F_s(\var(\mathcal{A}))/F_s(\var(\mathcal{A}))^l$ is nilpotent with index $\leq s$. Using Birkhoff's  embedding \cite{Birk} for the relatively free algebra in $\var(\mathcal{A})$, we have that 
\[F_s(\var(\mathcal{A}))\hookrightarrow \prod_{i=1}^t\mathcal{A}_i,\quad \mathcal{A}_i\cong\mathcal{A}.\]
By Theorem \ref{nilpJGA}, $J_G(F_s(\var(\mathcal{A}))$ is a nilpotent $G$-graded ideal. Thus, $J_G(F_s(\var(\mathcal{A}))$ is a subalgebra of the direct product of nilpotent $G$-graded subalgebras of $\mathcal{A}$ and, consequently, has nilpotency index $\leq s$. Now, consider the natural projection $\psi:F_s(\var(\mathcal{A}))\rightarrow  F_s(\var(\mathcal{A}))/F_s(\var(\mathcal{A}))^{l}$. Applying Lemma \ref{indexepimorphism}, we find that $F_s(\var\cA)^s\subset F_s(\var\cA)^l$, and the proof is complete. 
\end{proof}

\begin{theorem}\label{teoprincipal}
Let $\mathcal{A}$ be a finite $G$-graded algebra, then $\var(\mathcal{A})$ is a Cross variety. In particular, $T_G(\mathcal{A})$ satisfies the finite basis property.
\end{theorem}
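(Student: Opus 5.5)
The plan is to reduce the theorem to Theorem \ref{criteriocross}: a variety of finite $G$-graded algebras is Cross precisely when it has finite nilpotency index. So the proof has three steps: check that $\var(\mathcal{A})$ is a variety of finite $G$-graded algebras, bound its nilpotency index, and then read off the finite basis property.

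First, since $\mathcal{A}$ is finite, $\var(\mathcal{A})$ is locally finite by Birkhoff's Theorem, as recalled before Proposition \ref{locallyfinite2}. Concretely, each relatively free algebra $F_d(\var(\mathcal{A}))$ embeds into a finite direct power of $\mathcal{A}$, hence is finite. Every finitely generated algebra in the variety is a homomorphic image of some $F_d(\var(\mathcal{A}))$, so it is finite as well. For the hypotheses of Proposition \ref{criteriolocallyfinite} one can also note directly that $\mathcal{A}$ satisfies an identity $(x_1^{(e)})^k=(x_1^{(e)})^l$ with $k>l>0$. Indeed, $\mathcal{A}_e$ is a finite semigroup under multiplication, so with $N=|\mathcal{A}_e|$ the element $a^{N!}$ is idempotent for every $a\in\mathcal{A}_e$; hence $a^{2N!}=a^{N!}$.

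Second, and this is the substantive step, we bound the nilpotency index. Every $G$-graded nilpotent subalgebra of $\mathcal{A}$ is a subset of the finite set $\mathcal{A}$, so its nilpotency index is at most $|\mathcal{A}|+1$. The reason is that a strictly descending chain $B\supsetneq B^2\supsetneq\cdots$ of subsets of $\mathcal{A}$ has length at most $|\mathcal{A}|$, since the powers strictly decrease until they reach $\{0\}$. There are only finitely many such subalgebras, so the maximum $s$ of their nilpotency indices is a well-defined finite number. By Proposition \ref{indexoffinitealgebra}, the nilpotency index of $\var(\mathcal{A})$ equals this $s$, and so it is finite.

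Third, Theorem \ref{criteriocross} now applies and shows that $\var(\mathcal{A})$ is Cross. Unwinding this gives the concrete identities. Corollary \ref{identitiesfiniteindex} provides an identity of the form (\ref{ide}) holding in $\mathcal{A}$. The finitely based variety $\mathfrak{M}$ defined by it, together with a power identity as above, is locally finite by Proposition \ref{criteriolocallyfinite}. By Theorem \ref{boundedcritical}, its critical algebras are generated by a bounded number of homogeneous elements, so by Remark \ref{conditioncross} and Theorem \ref{crosssubvariety} the subvariety $\var(\mathcal{A})$ is Cross. In particular, by condition (ii) in the definition of a Cross variety, $T_G(\mathcal{A})$ has a finite basis.

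The main obstacle is the second step, which relies on Proposition \ref{indexoffinitealgebra}. Given that result, the only care needed is to confirm that the maximum nilpotency index is taken over a finite set, which the finiteness of $\mathcal{A}$ guarantees. Everything else is assembling results already established earlier in the paper.
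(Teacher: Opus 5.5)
Your proposal is correct and follows the paper's proof. Like the paper, you get finite nilpotency index of $\var(\mathcal{A})$ from Proposition \ref{indexoffinitealgebra} and then conclude with Theorem \ref{criteriocross}. Your extra checks add detail the paper leaves implicit: local finiteness, that the maximum over nilpotent subalgebras is a finite number, and the unwinding of Theorem \ref{criteriocross}.
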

\begin{proof}
By Proposition \ref{indexoffinitealgebra}, $\var(\mathcal{A})$ has finite nilpotency index. Hence, using Theorem \ref{criteriocross}, $\var(\mathcal{A})$ is a Cross variety. It follows that $\mathcal{A}$ admits finite basis of $G$-graded polynomial identities.
\end{proof}

\end{document}